\documentclass[12pt,leqno]{amsart}
\usepackage{tikz}
\usetikzlibrary{automata,positioning}
\usepackage{amssymb,amsthm,amsmath,latexsym}
\usepackage{graphicx} 
\graphicspath{{figures/}} 
\usepackage{booktabs} 
\usepackage[font=small,labelfont=bf]{caption} 
\usepackage{amsfonts, amsmath, amsthm, amssymb} 
\usepackage{wrapfig} 
\usepackage{multicol}

\newtheorem{theorem}{\sc Theorem}[section]

\newtheorem{lem}[theorem]{\sc Lemma}
\newtheorem{prop}[theorem]{\sc Proposition}

\newtheorem{ex}[theorem]{\sc Example}

 \newtheorem*{thmA}{Theorem A}
 \newtheorem*{thmB}{Theorem B}

\title{Self-similarity of some soluble relatively free groups}

\author{Adilson A. Berlatto}
\address{Instituto de Ciências Exatas e da Terra - ICET/CUA, Universidade Federal de Mato Grosso,
Pontal do Araguaia-MT, 78698-000 Brazil}
\email{(Berlatto) adilson.berlatto@gmail.com}

\author{Alex C. Dantas}
\address{Departamento de Matem\'atica, Universidade de Bras\'ilia,
Brasilia-DF, 70910-900 Brazil}
\email{(Dantas) alexcdan@gmail.com}

\author{Tulio M. G. Santos}
\address{Departamento de Matem\'atica, Centro de Tecnologia, Cidade Universit\'aria da Universidade Federal do Rio de Janeiro, Rio de Janeiro-RJ, 21941-909 Brazil}
\email{(Santos) tuliosantos.im@gmail.com}

\subjclass[2020]{20E08, 20B27, 20K20.}
\keywords{}

\begin{document}
\maketitle

\begin{abstract}
In this paper we prove that a free nilpotent group of finite rank is transitive self-similar. In contrast, we prove that a free metabelian group of rank $r \geq 2$ is not transitive self-similar.
\end{abstract}

\section{Introduction}

A group $G$ is self-similar if the group has a faithful state-closed representation on an infinite regular one-rooted $m$-tree $\mathcal{T}_{m}$,  for some integer $m\geq 2$; in addition, if $G$ acts transitively on the first level of the tree, $G$ is said to be  transitive self-similar. Nekrashevych and Sidki \cite{NS} produced a method for construction of transitive self-similar groups via a virtual endomorphism; a group $G$ is transitive self-similar if and only if there exist a subgroup $H$ of index $m$ in $G$ and an endomorphism $f: H \rightarrow G$ such that the maximal $f$-invariant normal subgroup $K$ of $G$ contained in $H$ is trivial (in this case $f$ is called \emph{simple}).

The literature on self-similar groups is quite rich. They have been studied for
abelian groups \cite{BS}, finitely generated nilpotent groups \cite{AS}, affine linear groups \cite{SaS}, arithmetic groups \cite{K} and soluble groups \cite{BaSu}. Nekrashevych and Sidki \cite{NS} studied the structure of self-similar free abelian groups of finite rank in terms of their virtual endomorphisms. We use this approach to establish results on the self-similarity of finitely generated free nilpotent groups and finitely generated free metabelian groups. 

Following P. Hall's notations, we denote a finitely generated torsion-free nilpotent group of class $c$ by $\mathfrak{T}_c$. In \cite{AS} it was shown that if $G$ is a $\mathfrak{T}_2$-group and $H$ is a subgroup of finite index in $G$, then there exists a subgroup $K$ of finite index in $H$ which admits a simple surjective virtual endomorphism $f:K \to G$. A surjective virtual endomorphism is called \emph{recurrent} and, if it is simple, we say that  $G$ is a \emph{recurrent self-similar group}.  A group $G$ is called \textit{compressible} if any finite-index subgroup of $G$ contains a finite-index subgroup $K$ such that $K \simeq G$. In \cite{smith}, G. Smith showed that the free nilpotent group $N_{r,c}$ of class $c$ and finite rank $r$ is compressible.  We extend this result, observing that in the proof we can produce a transitive recurrent self-similar representation from each finite index subgroup of $N_{r,c}$: 

\begin{thmA}
The free nilpotent group $N_{r,c}$ of class $c$ and finite rank $r$ is recurrent transitive self-similar. Furthermore, $N_{r,c}$ is an automata group. 
\end{thmA}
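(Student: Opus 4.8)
The plan is to verify the Nekrashevych--Sidki criterion recalled in the introduction with a single very concrete virtual endomorphism, and then analyse its sections. Write $N=N_{r,c}$ with free generators $x_1,\dots,x_r$, fix an integer $p\geq 2$, and let $\psi\colon N\to N$ be the endomorphism determined by $x_i\mapsto x_i^{\,p}$. First I would record that $\psi$ is injective: it descends from the corresponding endomorphism of the free group, it preserves the lower central series, and since commutator brackets are multilinear modulo the next term, on each factor $\gamma_i(N)/\gamma_{i+1}(N)\cong\mathbb{Z}^{d_i}$ the map $\psi$ is multiplication by $p^i$ and hence injective; a filtration argument then gives injectivity of $\psi$. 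Its image $H:=\psi(N)=\langle x_1^{\,p},\dots,x_r^{\,p}\rangle$ has finite index, because $N/H$ is finitely generated nilpotent and generated by torsion elements (the images of the $x_i$), hence finite; and $[N:H]\geq[N:H\gamma_2(N)]=p^r\geq 2$ since $H$ maps onto $p\mathbb{Z}^r$ in the abelianisation. Thus $f:=\psi^{-1}\colon H\to N$ is a well-defined surjective (hence recurrent) virtual endomorphism, and the associated representation on $\mathcal T_{m}$ with $m=[N:H]$ is transitive on the first level, the coset action of $N$ on $N/H$ being transitive.

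The next step is to show that $f$ is simple. Let $L\trianglelefteq N$ with $L\subseteq H$ and $f(L)\subseteq L$; applying $\psi$, the latter reads $L\subseteq\psi(L)$. Suppose $L\neq 1$ and let $k$ be maximal with $L_k:=L\cap\gamma_k(N)\neq 1$; then $L\cap\gamma_{k+1}(N)=1$, so $L_k$ embeds into $\gamma_k(N)/\gamma_{k+1}(N)$. Because $\psi$ preserves the lower central series and is injective on its factors, one has $\psi^{-1}(\gamma_k(N))=\gamma_k(N)$ inside $N$, whence $\psi^{-1}(L_k)\subseteq\psi^{-1}(L)\cap\gamma_k(N)\subseteq L_k$; since $L_k\subseteq H$ this gives $L_k\subseteq\psi(L_k)$. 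Reducing modulo $\gamma_{k+1}(N)$, where $\psi$ acts as multiplication by $p^k$, we get $\overline{L_k}\subseteq p^k\overline{L_k}\subseteq\overline{L_k}$, hence $\overline{L_k}=p^{jk}\overline{L_k}$ for all $j\geq 1$; as $\overline{L_k}$ is nonzero finitely generated free abelian and $p^k\geq 2$, this is impossible. Therefore $f$ is simple, and by the Nekrashevych--Sidki criterion $N_{r,c}$ is recurrent transitive self-similar.

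For the automaton statement I would pass to the Mal'cev completion $N^{\mathbb{Q}}=\exp(\mathfrak n_{\mathbb{Q}})$, where $\mathfrak n_{\mathbb{Q}}=\bigoplus_{i=1}^{c}\mathfrak n_i$ is the free nilpotent Lie algebra with its weight grading. This grading carries the dilation automorphisms $\delta_t$ acting by $t^i$ on $\mathfrak n_i$, and since $\delta_p(x_i)=\exp(p\log x_i)=x_i^{\,p}$ while $N=\langle x_1,\dots,x_r\rangle$, one gets $\psi=\delta_p|_N$, so that $f=\delta_{1/p}|_H$. Introducing the homogeneous quasi-norm $\rho(g)=\max_i\|v_i(\log g)\|^{1/i}$, where $v_i$ is the weight-$i$ component of $\log g$, we obtain the exact contraction $\rho(f(h))=p^{-1}\rho(h)$ for all $h\in H$. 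On the other hand, conjugation by a fixed element $x$ acts on $\log$ by the unipotent linear operator $\exp(-\mathrm{ad}\log x)$, which strictly raises the grading, so a short estimate yields $\rho(x^{-1}gx)\leq\rho(g)+C$ with $C$ depending only on the (finite) chosen transversal $T$ of $H$ in $N$. Hence each section $g|_{x}=f\big((x')^{-1}gx\big)$, for $x,x'\in T$, satisfies $\rho(g|_{x})\leq p^{-1}\big(\rho(g)+C\big)$, and consequently the set $B:=\{g\in N:\rho(g)\leq R_0\}$ with $R_0:=\max\{C/(p-1),\rho(x_1),\dots,\rho(x_r)\}$ is finite (a bounded subset of the discrete set $\log N$), symmetric (as $\rho(g^{-1})=\rho(g)$), generates $N$, and is closed under taking sections. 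By the standard criterion the self-similar representation afforded by $f$ is then finite-state, so $N_{r,c}$ is an automata group.

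I expect the crux to be the automaton part, and specifically the identification $\psi=\delta_p|_N$ inside the Mal'cev completion: it is exactly this that turns $f$ into an honest dilation and delivers the clean, error-free contraction $\rho\circ f=p^{-1}\rho$ valid for every $p\geq 2$. Working directly with Hall collection in $N$ one would instead have to control the carry-type corrections to the weight-$w$ coordinate under $\psi^{-1}$ and could only salvage a contraction by taking $p$ large; the Lie-theoretic viewpoint removes that difficulty. Everything else --- injectivity of $\psi$, finiteness and the lower bound for $[N:H]$, and simplicity of $f$ --- is elementary lower-central-series bookkeeping.
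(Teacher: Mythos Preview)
Your argument is correct, but it follows a genuinely different path from the paper's. The paper chooses the \emph{cyclically permuted} endomorphism $g_i\mapsto g_{i+1}^{\,n_{i+1}}$ (indices mod $r$), proves simplicity by showing $G^{f^{kr}}\leq G^{n^k}$ and invoking Baumslag's intersection theorem $\bigcap_k G^{n^k}=1$, and then obtains finite-stateness by quoting the Bondarenko--Kravchenko spectral-radius criterion after computing the characteristic polynomial $t^r-1/(n_1\cdots n_r)$ of the abelianised virtual endomorphism. You instead take the \emph{diagonal} dilation $x_i\mapsto x_i^{\,p}$, prove simplicity by a direct lower-central-series squeeze (the inclusion $\overline{L_k}\subseteq p^k\overline{L_k}$ in a nonzero finitely generated free abelian group is impossible), and replace the black-box citation for finite-stateness with an explicit contraction argument on the Mal'cev completion via the homogeneous quasi-norm. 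Your route is more self-contained --- no appeal to Theorem~\ref{baumslag} or Theorem~\ref{BK} --- at the price of setting up the Lie machinery; the paper's route is shorter and, because it allows arbitrary exponents $n_1,\dots,n_r$, dovetails with Proposition~\ref{generators} and Theorem~\ref{index} to produce a recurrent simple virtual endomorphism from \emph{every} finite-index subgroup, not just those of index $p^{A_r^c}$.

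One small imprecision worth tightening: the section of $g$ at a first-level vertex is $f\bigl(t\,g\,t'^{-1}\bigr)$ for transversal elements $t,t'$ that need not coincide, so it is a two-sided translation rather than a conjugation. Your unipotence remark about $\exp(-\mathrm{ad}\log x)$ justifies $\rho(x^{-1}gx)\le\rho(g)+C$, but the passage to $\rho\bigl((x')^{-1}gx\bigr)\le\rho(g)+C$ needs one more line: either write $(x')^{-1}gx=\bigl((x')^{-1}x\bigr)\cdot(x^{-1}gx)$ and observe that left multiplication by an element from the finite set $T^{-1}T$ also shifts $\rho$ by a bounded amount (same BCH estimate, since the extra factor contributes only bounded weight-components), or run the BCH bound directly for $\log(agb)$ with $a,b$ ranging over the finite transversal. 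With that adjustment the contraction $\rho(g|_y)\le p^{-1}\bigl(\rho(g)+C\bigr)$ and the rest of your finite-state conclusion go through as written.
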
 

Recall that a group $G$ is said to be an \textit{automata group} if $G$ is generated by the states of an invertible finite state automata $\mathcal{A}$.

Denote the free metabelian group of rank $r$ by $\mathbb{M}_{r}$. In \cite{BS}, Brunner and Sidki showed that a free metabelian group of finite rank has a faithful finite state representation on the binary tree. We extend Theorem 1 of \cite{DS1} and use it to prove the following result.

\begin{thmB}
The free metabelian group $\mathbb{M}_{r}$ of rank $r \geq 2$ is not transitive self-similar. 
\end{thmB}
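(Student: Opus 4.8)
The plan is to assume, for contradiction, that $\mathbb{M}_r$ is transitive self-similar for some $r\ge 2$. By the Nekrashevych--Sidki criterion there is then a subgroup $H$ of finite index $m$ in $M:=\mathbb{M}_r$ and a homomorphism $f\colon H\to M$ whose $f$-core is trivial. Set $M'=[M,M]$, $Q=M/M'\cong\mathbb{Z}^{r}$, let $\pi\colon M\to Q$ be the projection and $A=\mathbb{Z}[Q]$. The Magnus embedding exhibits $M'$ as a submodule of a finitely generated free $A$-module (indeed $M'\cong\ker\!\big(A^{r}\to A,\ e_i\mapsto x_i-1\big)$), so $M'$ is a finitely generated torsion-free $A$-module; since $r\ge 2$ it is nonzero, hence faithful over the domain $A$, and conjugation by $g\in M$ acts on $M'$ as multiplication by the unit $\pi(g)^{\pm 1}$ of $A$. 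Two facts follow: (i) if $1\ne z\in B\cap M'$ for an abelian subgroup $B\le M$ and $b\in B$, then $0=[z,b]=(\pi(b)^{\pm1}-1)z$ forces $\pi(b)=1$, so every abelian subgroup of $M$ is either contained in $M'$ or meets it trivially; (ii) the normal subgroups of $M$ contained in $M'$ are exactly the $A$-submodules of $M'$. By (ii) it suffices, to reach a contradiction, to produce a nonzero $A$-submodule $L\le M'$ with $L\le H$ and $f(L)\le L$, since such an $L$ is normal in $M$ and lies in the $f$-core. As $M'\cap H$ has finite index in $M'$ and $f(M'\cap H)$ is abelian, (i) splits the argument into two cases.

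Suppose first $f(M'\cap H)\cap M'=1$. Then $\pi$ is injective on $f(M'\cap H)$, so this group has finite $\mathbb{Z}$-rank; as $M'\cap H$ is finite index in the faithful (hence infinite-$\mathbb{Z}$-rank) module $M'$, its subgroup $\ker f\cap M'=\ker\!\big(f|_{M'\cap H}\big)$ is nonzero and $M'/(\ker f\cap M')$ has finite $\mathbb{Z}$-rank. Now $\ker f\cap M'$ is normalized by $H$ and centralized by $M'$, hence has only finitely many $M$-conjugates, each obtained by multiplying by a unit of $A$ and so still of finite $\mathbb{Z}$-corank in $M'$; therefore $L:=\operatorname{Core}_M(\ker f\cap M')=\bigcap_{g\in M}(\ker f\cap M')^{g}$ has finite $\mathbb{Z}$-corank in $M'$, in particular $L\ne 0$. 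It is an $A$-submodule of $M'$, lies in $\ker f\le H$, and $f(L)=1\le L$ — contradiction.

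Suppose now $f(M'\cap H)\le M'$. Then $\pi\circ f$ is trivial on $M'\cap H$ and on $[H,H]$, hence factors through $\pi(H)$ as a homomorphism $\bar\psi\colon\pi(H)\to Q$; rewriting $f(gzg^{-1})=f(g)f(z)f(g)^{-1}$ additively on $M'$ gives $f(q\cdot z)=\bar\psi(q)\cdot f(z)$ for $q\in\pi(H)$ and $z\in M'\cap H$, i.e. $f|_{M'\cap H}$ is $\bar\psi$-semilinear. If $\bar\psi$ is not injective, choose $1\ne q\in\ker\bar\psi$; then $(q-1)(M'\cap H)\subseteq\ker f$, and since $nM'\subseteq M'\cap H$ for $n=[M':M'\cap H]$, the subgroup $L:=n(q-1)M'$ is a nonzero $A$-submodule of $M'$ (because $A$ is a domain, $q\ne 1$, and $M'$ is torsion-free) contained in $\ker f\le H$ with $f(L)=1$ — again a contradiction.

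It remains to treat the case that $\bar\psi$ is injective (one checks this forces $M'\not\le H$). This reduces Theorem~B to the purely module-theoretic assertion — the extension of Theorem~1 of \cite{DS1} — that the $A$-module $M'$ admits no \emph{simple} semilinear virtual endomorphism: for every finite-index $V\le M'$, every injective $\bar\psi$, and every $\bar\psi$-semilinear $f\colon V\to M'$, some nonzero $A$-submodule $L\le V$ satisfies $f(L)\le L$. I expect this to be the main obstacle. The difficulty is exactly that $V$ and $\ker f$ need only be $\mathbb{Z}[\pi(H)]$-submodules of $M'$, so neither the finite-$\mathbb{Z}$-corank argument nor the $(q-1)$-argument above applies directly. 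I would attack it by restricting scalars along a one-parameter subgroup of $Q$ adapted to $\bar\psi$, reducing to the rank-one ($Q=\mathbb{Z}$) situation of \cite{DS1}; failing that, one can try to exploit finiteness of $M'$ over the Noetherian ring $A$ and the behaviour of the descending chains $M'\supseteq A\cdot f(M'\cap H)\supseteq\cdots$ and $\operatorname{Dom}(f)\supseteq\operatorname{Dom}(f^{2})\supseteq\cdots$ — injectivity of $\bar\psi$ preventing $f$ from escaping to infinity indefinitely — to trap a nonzero invariant submodule.
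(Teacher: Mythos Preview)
Your reduction is sound and your first two cases are correct, but the proof is genuinely incomplete: the case where $\bar\psi$ is injective is the whole point, and you have not proved it. Your two suggested attacks are unlikely to succeed as stated. Restricting to a one-parameter subgroup ``adapted to $\bar\psi$'' presupposes a $\bar\psi$-invariant rank-one sublattice of $Q$, which need not exist (the rational extension of $\bar\psi$ may have no rational eigenvector). And Noetherianity of $A=\mathbb{Z}[Q]$ controls ascending chains, not the descending chains $M'\supseteq A\!\cdot\! f(M'\cap H)\supseteq\cdots$ you wrote down; nothing here prevents those from strictly descending forever.

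What the paper does instead is a \emph{finite-support / disjoint-translates} argument, and it proves a single concrete statement: $mM'$ is an $f$-invariant normal subgroup. One works not in $M'$ itself but in the ambient base $\mathcal{B}^{\mathcal{Q}}=\bigoplus_{q\in\mathcal{Q}}\mathcal{B}^{\,q}$ of the wreath product $\mathcal{B}\wr\mathcal{Q}$ containing $M$ via the Magnus embedding; there one has a genuine direct-sum decomposition indexed by $\mathcal{Q}$ that $M'$ alone does not possess. Fix a transversal $c_1,\dots,c_{r_0}$ of $M'\cap H$ in $M'$ and an element $a\in M'$. The images $f(c_i^{\,m})$ and $f(a^{m})$ all have finite support in $\bigoplus_q \mathcal{B}^{\,q}$; choose a nontrivial $q\in Q$, set $z=\bar\psi(q^{m})\neq 1$ (your injectivity), and use torsion-freeness of $Q$ to pick $k$ so that the support of $z^{k}\!\cdot f(a^{m})$ is disjoint from the finite set of supports of the $f(c_i^{\,m})$. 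Writing $(a^{q^{mk}}c_{i_k}^{-1})^{m}=a^{mq^{mk}}c_{i_k}^{-m}\in m(M'\cap H)$, applying $f$ and your semilinearity gives an element of $mM'$ which, by the disjointness of supports, forces the component $z^{k}\!\cdot f(a^{m})$ to lie in $\bigoplus_u m\mathcal{B}^{\,z^{k}z_u}\subseteq mM'$, hence $f(a^{m})\in mM'$. This is precisely the missing idea: exploit finite support in the \emph{ambient} direct sum and translate to separate supports. Your case-split and the reduction to a semilinear virtual endomorphism of the $A$-module $M'$ are fine scaffolding, but the proof needs this support-separation step (or an honest substitute) to close.
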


\section{Preliminaries}

\textbf{Self-similar groups and virtual endomorphisms.} Let $Y=\{1,...,m\}$ be a finite alphabet with $m\geq 2$ letters. The set of finite words $Y^*$ over $Y$ has a structure
of a rooted $m$-ary tree, denoted by $\mathcal{T}(Y)$ or $\mathcal{T}_m$. The incidence relation on $\mathcal{T}_m$ is given by: $(u,v)$ is an edge if and only if there exists a letter $y$ such that $v=uy$. The empty
word $\emptyset$ is the root of the tree and the level $i$ is the set of all  words of length $i$. 

The automorphism group $\mathcal{A}_{m}$, or $\mathcal{A}\left( Y\right) $,
of $\mathcal{T}_{m}$ is isomorphic to the restricted wreath product
recursively defined as $\mathcal{A}_{m}=\mathcal{A}_{m}\wr Perm(Y)$. An automorphism $\alpha $ of $%
\mathcal{T}_{m}$ has the form $\alpha =(\alpha _{1},...,\alpha _{m})\sigma
(\alpha )$, where the state $\alpha _{i}$ belongs to $\mathcal{A}_{m}$ and $\sigma :\mathcal{A}_{m}\rightarrow Perm(Y)$ is the permutational
representation of $\mathcal{A}_{m}$ on $Y$, the first level of the tree $%
\mathcal{T}_{m}$.
The action of $\alpha =(\alpha _{1},...,\alpha _{m})\sigma
(\alpha ) \in \mathcal{A}_m$ on a word $yu$ is given recursively by $(yu)^{\alpha}=y^{\sigma(\alpha)}u^{\alpha_y}$.

Given an element $\alpha$ that belongs to  $\mathcal{A}\left( Y\right) $, the set of automorphisms%
\begin{equation*}
Q(\alpha )=\{\alpha\}\cup Q(\alpha _{1}) \cup \dots \cup Q(\alpha _{m})
\end{equation*}%
is called the set of \textit{states} of $\alpha $ and this automorphism is
said to be \textit{finite-state} provided $Q(\alpha )$ is finite. A subgroup 
$G$ of $\mathcal{A}_{m}$ is \textit{state-closed} in the language of automata (or \textit{self-similar} in the language of dynamics) if $Q(\alpha )$ is a subset of $G$ for all $\alpha $ in $G$ and is \textit{transitive} if its action on the first level of the tree is transitive. 
A self-similar group  
which is finitely generated and finite-state is called an \textit{automata group}. 

Given a group $G$ and a virtual endomorphism $f:H \to G$ we produce a transitive state-closed action of $G$ on the regular rooted $m$-tree $\mathcal{T}_m$, where $m = [G : H]$. Let $T=\{t_1,...,t_{m}\}$ be a right transversal of $H$ in $G$ and consider $\sigma:G \to Perm(Y)$ given by $i^{\sigma(g)}=j$ if and only if $Ht_ig=Ht_j$. Note that $h_i=t_ig(t_{i^{\sigma(g)}})^{-1}\in H$. For each $g \in G$ we obtain 
$$(h_1,...,h_{m})\sigma(g) \in H \, {wr}_{T} \,G^{\sigma}.$$

Using the virtual endomorphism $f:H \to G$ we obtain a representation $\varphi:G \to \mathcal{A}_m$ defined recursively by

\begin{equation*}
\varphi :\text{ }g\mapsto \left(  h_{1} ^{f\varphi
},...,  h_{m} ^{f\varphi }\right) \,{\sigma(g) }.
\end{equation*}%

The kernel of the representation $\varphi$, called the $f$-core of $H$, that is the maximal  subgroup $K$ of $H$ which is normal in $G$ and $f$-invariant (in the sense $K^f \leqslant K$) \cite{NS}. If the $f$-core of $H$ is trivial then $G$ is a self-similar group and we say that $f$ is a \textit{simple} virtual endomorphism. \\

\noindent \textbf{Some results about nilpotent groups.}
We list below some facts about nilpotent groups. Such results can be found in \cite{Baumslag}, \cite{mhall} and \cite{phall}. 

The free nilpotent group $N_{r,c}$ of class $c$ and rank $r$ is isomorphic to the group  $\frac{F_{r}}{\gamma_{c+1}(F_{r})}$, where $F_r$ is the free group of rank $r$.
 The terms of the lower and the upper central series of $N_{r,c}$ coincide, that is, $\gamma_{i+1}(N_{r,c}) =  Z_{c-i}(N_{r,c})$, $\forall \; i  = 0,1, \ldots, c$. Also the quotients $\frac{N_{r,c}}{\gamma_i(N_{r,c})}$ are free nilpotent groups,  for all $ i =  2, \ldots, c$.

Let $G = \left\langle x_1, \ldots, x_r \right\rangle$ be a group. The commutators $c_j$ over $ \{x_1, \ldots, x_r\}$ and its weights $\omega(c_j)$ are inductively defined by:

\begin{enumerate}
    \item $c_i = x_i$, for $ i = 1, \ldots, r$, are the commutators of weight one;
    \item  if $c_i$ and $c_j$ are commutators, then $c_k = [c_i, c_j]$ is a commutator and $\omega(c_k) =  \omega(c_i)+\omega(c_j)$. 
\end{enumerate}

\noindent The basic commutators over $\{x_1, \ldots, x_r\}$ are useful for free nilpotent groups. They are defined inductively by: 

\begin{enumerate}
    \item weight one: $c_i = x_i$, for $ i = 1, \ldots, r$;
    \item weight $n \geq 2$:  $c_k=[c_i,c_j]$ where:
    \begin{enumerate}
        \item  $c_i$ and $c_j$ are basic commutators and $ \omega(c_i)+\omega(c_j)= n$;
        \item $i>j$ and if $c_i = [c_s, c_t]$, then $j \geq t$;
    \end{enumerate}
    \item basic commutators are ordered according their weights; in the case of  same weight, the order is arbitrary.
\end{enumerate}

\noindent We still define the weights $\omega_i(c)$ for $i = 1, \ldots, r$ by the rules $\omega_i(x_i) = 1$, $\omega_i(x_j) = 0$ if $i \neq j$ and recursively, $\omega_i([c_k, c_m]) = \omega_i(c_k) + \omega_i(c_m)$.

\noindent Consider $M_r(n)$ the number of basic commutators of weight $n$ over $\{x_1, \ldots, x_r\}$ and $M(n_1, \ldots, n_r)$ the number of basic commutators $c$ such that $\omega_i(c) = n_i$ for $i = 1, \ldots, r$, where $ n_1+ \cdots + n_r = n$ is a partition of $n$ into $r$ parts. Let $d = p_1^{m_1} \cdots p_k^{m_k}$ be a positive integer, where $p_1, \ldots, p_k$ are distinct primes and $m_i>0$, for all $i = 1, \ldots, k$. The {\it Mobius function} is defined by 
$$
\mu(d) =
\left\{
\begin{array}{ll}
1,           & \mbox{ if } d=1; \\
0,           & \mbox{ if } m_j >1, \;\; \mbox{ for some } j \in \{ 1, \ldots, k \}; \\
(-1)^k,      & \mbox{ if } d = p_1 p_2 \cdots p_k.
\end{array}
\right.
$$
Then we have the following results: \\
\begin{theorem}[Witt's formula]
\label{witt}
 With the above notation, \\
$$ M_r(n) = \frac{1}{n} \displaystyle\sum_{d|n} \mu(d) r^{\frac{n}{d}} \;\; \mbox{ and }$$
$$  M(n_1, n_2, \ldots, n_r) = 
\frac{1}{n}
\displaystyle\sum_{d|n_i}
\mu(d)
\frac{ \left( \frac{n}{d} \right) !} 
{\left( \frac{n_1}{d} \right) ! \left( \frac{n_2}{d} \right) ! 
\cdots \left( \frac{n_r}{d} \right) !}
.$$
\end{theorem}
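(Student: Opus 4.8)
The plan is to deduce both identities from the structure of the free Lie algebra together with the Poincar\'e--Birkhoff--Witt theorem (PBW), so that the only non-formal ingredient is the classical fact (see \cite{mhall}, \cite{phall}) that the basic commutators form a basis of the free Lie ring. Concretely, I would first let $L$ be the free Lie $\mathbb{Q}$-algebra on $x_1,\dots,x_r$, multigraded by the weight functions $\omega_1,\dots,\omega_r$, so that $L=\bigoplus_{\mathbf{n}}L_{\mathbf{n}}$ where $L_{\mathbf{n}}$ is spanned by the brackets of multidegree $\mathbf{n}=(n_1,\dots,n_r)$ and $L_n=\bigoplus_{|\mathbf{n}|=n}L_{\mathbf{n}}$ is the total-degree-$n$ component. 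The Hall basis theorem identifies the basic commutators of multidegree $\mathbf{n}$ (resp.\ of weight $n$) with a $\mathbb{Q}$-basis of $L_{\mathbf{n}}$ (resp.\ of $L_n$), whence
\[
M(n_1,\dots,n_r)=\dim_{\mathbb{Q}}L_{\mathbf{n}},\qquad M_r(n)=\dim_{\mathbb{Q}}L_n .
\]

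The second step is PBW. Since $U(L)$ is the free associative algebra $A$ on $x_1,\dots,x_r$, a PBW basis of $U(L)$ is given by the non-decreasing products of basic commutators, and each such monomial is homogeneous of the expected multidegree; hence $A\cong S(L)$ as multigraded vector spaces. Counting words of multidegree $\mathbf{n}$ in $A$ shows that the multigraded Hilbert series of $A$ in commuting variables $t_1,\dots,t_r$ equals $\bigl(1-(t_1+\cdots+t_r)\bigr)^{-1}$, while the symmetric-algebra side factors over a homogeneous basis. Writing $t^{\mathbf{n}}=t_1^{n_1}\cdots t_r^{n_r}$, this gives the identity
\[
\frac{1}{\,1-(t_1+\cdots+t_r)\,}=\prod_{\mathbf{n}\neq 0}\bigl(1-t^{\mathbf{n}}\bigr)^{-M(\mathbf{n})},
\]
and setting all $t_i=t$ yields $\bigl(1-rt\bigr)^{-1}=\prod_{n\ge 1}(1-t^n)^{-M_r(n)}$.

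Finally I would take $-\log$ of both sides and match coefficients. In one variable this gives $\sum_{k\ge1}r^kt^k/k=\sum_{n\ge1}M_r(n)\sum_{j\ge1}t^{nj}/j$, so the coefficient of $t^N$ reads $r^N=\sum_{n\mid N}n\,M_r(n)$, and ordinary Mobius inversion produces the first formula. In the multigraded case only the term of total degree $|\mathbf{n}|$ on the left contributes to $t^{\mathbf{n}}$, so comparing coefficients gives $\binom{|\mathbf{n}|}{n_1,\dots,n_r}=\sum_{j\mid g}(|\mathbf{n}|/j)\,M(\mathbf{n}/j)$ with $g=\gcd(n_1,\dots,n_r)$; since $(|\mathbf{n}|/j)\,M(\mathbf{n}/j)$ is a function of $\mathbf{n}/j$ alone, Mobius inversion over the divisors of $g$ gives $|\mathbf{n}|\,M(\mathbf{n})=\sum_{d\mid g}\mu(d)\binom{|\mathbf{n}|/d}{n_1/d,\dots,n_r/d}$, which is the second formula after dividing by $|\mathbf{n}|$.

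The generating-function bookkeeping and the two Mobius inversions are routine; the one substantive point — and the step I expect to be the real obstacle — is the Hall basis theorem itself, i.e.\ the linear independence of the basic commutators together with the identification $U(L)=A$, which I would cite rather than reprove. Alternatively one could run an entirely combinatorial argument, matching basic commutators of weight $n$ (and content $\mathbf{n}$) with Lyndon words and counting the latter via the Chen--Fox--Lyndon factorization and the standard necklace count, which leads to the same two formulas.
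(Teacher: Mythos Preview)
The paper does not prove this theorem: it is stated in the preliminaries as a classical fact, with the reader referred to \cite{Baumslag}, \cite{mhall}, \cite{phall} for proofs. So there is no ``paper's own proof'' to compare against.

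That said, your argument is correct and is one of the standard proofs. The chain --- Hall basis for the free Lie algebra, PBW to identify $U(L)$ with the free associative algebra and with $S(L)$ as multigraded spaces, then taking logarithms of the resulting product formula and applying M\"obius inversion --- is exactly how the necklace polynomial identities are usually derived in this setting. Your interpretation of the condition ``$d\mid n_i$'' in the second formula as $d\mid\gcd(n_1,\dots,n_r)$ is the right one, and the multivariate M\"obius inversion you describe goes through. You are also right that the only substantive input is the Hall basis theorem (equivalently, linear independence of basic commutators modulo higher weight), which the paper itself uses as Theorem~\ref{basis} and likewise cites rather than proves; and your alternative route via Lyndon words and the Chen--Fox--Lyndon factorization is equally legitimate and appears in the same references.
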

\begin{theorem}
\label{basis} 
Let $F$ be a free group with basis $\{x_1, \ldots, x_r \}$. Then the basic commutators of weight $n$ over $x_1, \ldots, x_r $ form a basis for the free abelian group 
$\frac{\gamma_n(F)}{\gamma_{n+1}(F)}$.
\end{theorem}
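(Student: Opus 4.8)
The plan is to establish the two halves of the statement in turn: first that the basic commutators of weight $n$ \emph{generate} the abelian group $\gamma_n(F)/\gamma_{n+1}(F)$, and then that they are $\mathbb{Z}$-linearly independent there; together these give that $\gamma_n(F)/\gamma_{n+1}(F)$ is free abelian with the basic commutators of weight $n$ as a basis. Note that with this strategy one does not need to know in advance that $\gamma_n(F)/\gamma_{n+1}(F)$ is torsion-free: a generating set that maps to a $\mathbb{Z}$-independent set under some homomorphism is automatically itself $\mathbb{Z}$-independent, hence a free basis.

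For the spanning half I would first record the elementary commutator calculus modulo the lower central series: for $x,y \in \gamma_a(F)$ and $z \in \gamma_c(F)$ one has $[xy,z] \equiv [x,z][y,z]$ and $[z,xy] \equiv [z,x][z,y]$ modulo $\gamma_{a+c+1}(F)$, and for $x \in \gamma_a(F)$, $y \in \gamma_b(F)$, $z \in \gamma_c(F)$ the Hall--Witt identity collapses to the Jacobi congruence $[[x,y],z]\,[[y,z],x]\,[[z,x],y] \equiv 1 \pmod{\gamma_{a+b+c+1}(F)}$. In other words, the commutator bracket induces a $\mathbb{Z}$-bilinear, alternating, Jacobi pairing $\gamma_a/\gamma_{a+1} \times \gamma_b/\gamma_{b+1} \to \gamma_{a+b}/\gamma_{a+b+1}$ on the associated graded group. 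I would then run P.\ Hall's collection process (equivalently, the rewriting procedure in the associated graded Lie ring): using the Jacobi congruence to reduce a non-basic bracketing to basic ones and bilinearity to expand products, every commutator of weight $n$ over $\{x_1,\dots,x_r\}$ is pushed, modulo $\gamma_{n+1}(F)$, to a $\mathbb{Z}$-linear combination of basic commutators of weight $n$. Since $\gamma_n(F)$ is generated modulo $\gamma_{n+1}(F)$ by the values of the weight-$n$ commutators in the $x_i$, the basic commutators of weight $n$ generate $\gamma_n(F)/\gamma_{n+1}(F)$; in particular that group is generated by the $M_r(n)$ basic commutators of weight $n$.

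For the independence half I would use the Magnus embedding $\mu \colon F \hookrightarrow \Lambda := \mathbb{Z}\langle\!\langle X_1,\dots,X_r\rangle\!\rangle$, $x_i \mapsto 1 + X_i$, which is injective by Magnus's theorem. Writing $\mathfrak{m}$ for the two-sided ideal of $\Lambda$ generated by $X_1,\dots,X_r$, one checks inductively (again from the commutator identities) that $\mu(\gamma_n(F)) \subseteq 1 + \mathfrak{m}^n$, which yields a well-defined homomorphism $\bar\mu_n \colon \gamma_n(F)/\gamma_{n+1}(F) \to \mathfrak{m}^n/\mathfrak{m}^{n+1}$, the latter being the free abelian group on the $r^n$ monomials of degree $n$ in the $X_i$. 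Under $\bar\mu_n$ a basic commutator $c$ of weight $n$ maps to the class of the corresponding bracketed Lie element $\lambda(c)$ in the $X_i$; thus $\bar\mu_n$ sends our generating set into the degree-$n$ component $L_r^{(n)}$ of the free Lie ring on $X_1,\dots,X_r$ contained in $\mathfrak{m}^n/\mathfrak{m}^{n+1}$. It therefore suffices to show the $\lambda(c)$, $c$ basic of weight $n$, are $\mathbb{Z}$-linearly independent, i.e.\ that the basic commutators form a basis of the free Lie ring. Here the basic Lie elements span $L_r^{(n)}$ by the collection argument carried out in the Lie setting; there are exactly $M_r(n)$ of them (Theorem~\ref{witt}); and, setting $\delta_k = \dim_{\mathbb{Q}}\!\big(L_r^{(k)} \otimes \mathbb{Q}\big)$, the Poincar\'e--Birkhoff--Witt theorem applied to $U(L_r \otimes \mathbb{Q}) = \mathbb{Q}\langle X_1,\dots,X_r\rangle$ gives the Hilbert-series identity $\tfrac{1}{1-rt} = \prod_{k \geq 1}(1-t^k)^{-\delta_k}$, whence $\delta_n = \tfrac{1}{n}\sum_{d\mid n}\mu(d) r^{n/d} = M_r(n)$ by M\"obius inversion, exactly as in Theorem~\ref{witt}. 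A spanning set of cardinality $M_r(n)$ in a $\mathbb{Q}$-vector space of dimension $M_r(n)$ is a basis, so the $\lambda(c)$ are $\mathbb{Q}$-, hence $\mathbb{Z}$-, linearly independent. Pulling this back along $\bar\mu_n$ shows the basic commutators of weight $n$ are $\mathbb{Z}$-independent in $\gamma_n(F)/\gamma_{n+1}(F)$, and with the spanning statement they form a free basis.

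The first paragraph's collection process is routine but notation-heavy bookkeeping; the real content---and where I expect the main difficulty---is the independence argument: controlling $\mu(\gamma_n(F))$ inside $1 + \mathfrak{m}^n$ and invoking the free Lie ring basis theorem (equivalently the PBW/Witt dimension count) to upgrade ``generating set of size $M_r(n)$'' to ``free basis''.
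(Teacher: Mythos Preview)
The paper does not give its own proof of this statement: Theorem~\ref{basis} is listed in the preliminaries as a known fact, with the blanket attribution ``Such results can be found in \cite{Baumslag}, \cite{mhall} and \cite{phall}.'' So there is nothing to compare your argument against in the paper itself.

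That said, what you outline is essentially the classical proof one finds in those references (M.~Hall's \emph{Theory of Groups} in particular): the collection process gives generation, and the Magnus embedding together with the basic-commutator basis of the free Lie ring gives independence. Your sketch is correct. One small remark: you invoke injectivity of the Magnus map $\mu$ (``by Magnus's theorem''), but you never actually use it---all you need is that $\mu(\gamma_n(F)) \subseteq 1 + \mathfrak{m}^n$, so that $\bar\mu_n$ is well defined, and that $\bar\mu_n$ carries a basic commutator $c$ to the Lie bracket $\lambda(c)$ modulo $\mathfrak{m}^{n+1}$. The independence then comes entirely from the free Lie ring side, and the PBW/Witt dimension count you describe is the standard way to finish.
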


\noindent We observe that if $G = N_{r,c}$, then 
$
\frac{\gamma_n(G)}{\gamma_{n+1}(G)} \simeq \frac{\gamma_n(F)}{\gamma_{n+1}(F)}$,  for $n = 1, \ldots, c$. Thus the rank of $\frac{\gamma_n(G)}{\gamma_{n+1}(G)}$ is $M_r(n)$. In particular, $\gamma_c(G)$ is free abelian of rank $M_r(c)$.

\noindent Let $G$ be a group. A subgroup $H$ of $G$ is said to be {\it isolated} in $G$ if the conditions $x \in G$ and $x^n \in H$, for some $n \geq 1$, imply $x \in H$. Following P. Hall, finitely generated torsion-free nilpotent groups are called $\mathfrak{T}$-groups. The following results concern to $\mathfrak{T}$-groups; the proofs can be found in \cite{Baumslag} and \cite{smith}.
\begin{theorem}
\label{smith} 
Let $G$ be a $\mathfrak{T}$-group such that $[G:HG']$ is finite. Then $[G:H]$ is finite.
\end{theorem}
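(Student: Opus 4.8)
**Proof proposal for Theorem \ref{smith} (the statement: if $G$ is a $\mathfrak{T}$-group and $[G:HG']$ is finite, then $[G:H]$ is finite).**

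The plan is to induct on the nilpotency class $c$ of $G$. The base case $c=1$ is immediate: there $G=G'H\cdot$(trivial), so $G'=1$, $HG'=H$, and $[G:H]=[G:HG']$ is finite by hypothesis. For the inductive step, suppose the result holds for all $\mathfrak{T}$-groups of class less than $c$, and let $G$ have class $c$. First I would pass to the quotient $\bar G = G/\gamma_c(G)$, which is a $\mathfrak{T}$-group of class at most $c-1$. Writing $\bar H$ for the image of $H$, one checks that $\bar H \bar G' = \overline{HG'}$, so $[\bar G : \bar H \bar G']$ divides $[G:HG']$ and is finite; by the inductive hypothesis $[\bar G:\bar H]$ is finite, equivalently $[G:H\gamma_c(G)]$ is finite. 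So it remains to control the "top" piece: it suffices to show that $[H\gamma_c(G):H]$ is finite, i.e. that $H$ has finite index in $H\gamma_c(G)$, since then $[G:H]=[G:H\gamma_c(G)]\cdot[H\gamma_c(G):H]$ is a product of two finite numbers.

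For that last point, the key observation is that $\gamma_c(G)$ is central in $G$ (as $G$ has class $c$) and, by the remarks preceding the theorem, is \emph{free abelian of finite rank} $M_r(c)$ — more precisely, finitely generated abelian and torsion-free. Set $A = \gamma_c(G)$ and $B = A \cap H$. Then $H\gamma_c(G)/H \cong A/B$, so I need $A/B$ finite, i.e. $B$ of finite index in the finitely generated free abelian group $A$; equivalently $B$ has full rank in $A$. To see this I would use the hypothesis that $[G:HG']$ is finite together with the fact that $A=\gamma_c(G)\le G'$: for any $a\in A$, some power $g^n$ lies in $HG'$, and then pushing down through the central series one extracts that a suitable power $a^k$ lies in $H$; since $a^k \in A$ as well, $a^k \in B$. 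Thus $A/B$ is a torsion finitely generated abelian group, hence finite.

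The step I expect to be the main obstacle is precisely this last extraction — showing that finiteness of $[G:HG']$ forces $A/(A\cap H)$ to be torsion. The naive argument "$g^n \in HG'$ implies $g^n = h c$ with $c\in G'$" does not immediately land inside $H$, because $c$ need not be in $H$. The clean way around this is to iterate the descent one central factor at a time: apply the already-established fact that $[G:H\gamma_c(G)]$ is finite to see that for $a\in A\le G'\le G$ a power $a^{n_1}$ lies in $H\gamma_c(G)=HA$; but $a^{n_1}\in A$, so $a^{n_1}\in A\cap HA = (A\cap H)A$... which is a tautology. The genuinely correct route, which I would follow, is instead to invoke the standard commutator-collection / isolator machinery for $\mathfrak{T}$-groups (the isolated-subgroup notion recalled just before the theorem, cf. \cite{Baumslag}): the isolator $\sqrt{H}$ of $H$ in $G$ is a subgroup of finite index over $H$, and $HG'$ has finite index in $G$ forces $\sqrt{H}\,G' = G$, whence by the $c=1$-type argument applied in the nilpotent setting $\sqrt H = G$; then $[G:H]=[\sqrt H : H]$ is finite. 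I would present the induction-plus-central-quotient skeleton above as the backbone and fill the top-factor step with this isolator argument, which is where the real content of the lemma sits.
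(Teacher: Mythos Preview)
The paper does not prove Theorem~\ref{smith}: it is listed among the preliminary facts about $\mathfrak{T}$-groups and explicitly referred to \cite{Baumslag} and \cite{smith} for a proof. So there is no ``paper's own proof'' to compare against; I will therefore assess your proposal on its own terms.

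Your induction-on-class skeleton is the standard one and is correct up to the point you yourself flag: reducing to the finiteness of $[\gamma_c(G):\gamma_c(G)\cap H]$. The isolator argument you propose to close this gap, however, has a genuine hole. You assert that ``$HG'$ has finite index in $G$ forces $\sqrt{H}\,G' = G$'', and then apply the nilpotent Nakayama lemma to conclude $\sqrt{H}=G$. But the implication $[G:HG']<\infty \Rightarrow \sqrt{H}\,G'=G$ is precisely the content of the theorem in disguise: from $[G:HG']<\infty$ one only gets $\sqrt{HG'}=G$, and passing from $\sqrt{HG'}$ to $\sqrt{H}\,G'$ (equivalently, showing that every $g$ with a power in $HG'$ already has a power in $H$) is exactly what must be proved. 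Combined with the black-box fact $[\sqrt{H}:H]<\infty$, your argument is circular at this step.

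The clean way to finish your induction, and the one found in the cited references, is to exploit multilinearity of commutators on the last term of the lower central series. You have already shown $[G:H\gamma_c(G)]<\infty$; write $N$ for this index. Since $\gamma_c(G)$ is central, for generators $x_1,\dots,x_c$ of $G$ one has $x_i^{N}=h_iz_i$ with $h_i\in H$ and $z_i\in\gamma_c(G)\le Z(G)$, whence
\[
[x_1,\dots,x_c]^{N^{c}}=[x_1^{N},\dots,x_c^{N}]=[h_1z_1,\dots,h_cz_c]=[h_1,\dots,h_c]\in H.
\]
Thus every basic $c$-fold commutator has a power in $H\cap\gamma_c(G)$, so the finitely generated abelian group $\gamma_c(G)/(H\cap\gamma_c(G))$ is torsion, hence finite, and $[G:H]=[G:H\gamma_c(G)]\cdot[\gamma_c(G):H\cap\gamma_c(G)]<\infty$. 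This replaces your isolator step without appealing to anything beyond what you had already set up.
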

\begin{theorem}
\label{baumslag}
Let $G$ be a $\mathfrak{T}$-group and $H$ an isolated subgroup of $G$. Then, for every prime $p$, we have
$$
\displaystyle\bigcap_{i \geq 1} G^{p^i} H = H.
$$
 \end{theorem}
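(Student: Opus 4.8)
It suffices to prove $\bigcap_{i\ge1}G^{p^i}H\subseteq H$, the reverse inclusion being obvious. I would argue by induction on the Hirsch length $h(G)$. First, if $H$ has finite index in $G$ then for each $g\in G$ the cyclic group $\langle g\rangle$ meets $H$ in a finite-index subgroup, so some positive power of $g$ lies in $H$, whence $g\in H$ since $H$ is isolated; thus $H=G$ and the claim is trivial. So assume $H$ is proper, hence of infinite index. The crucial step is to construct a subgroup $L$ with $H\le L\lhd G$, $G/L\cong\mathbb Z$ and $H$ isolated in $L$. Let $\sqrt{G'}$ denote the isolator of the derived subgroup $G'=[G,G]$ (again a normal subgroup, as $G$ is nilpotent), and put $A=G/\sqrt{G'}$, a finitely generated torsion-free abelian group. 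Let $\bar H$ be the image of $H$ in $A$. If $\bar H$ had finite index in $A$, then $HG'$ would have finite index in $G$, so $[G:H]$ would be finite by Theorem \ref{smith}, contrary to assumption; hence $A/\bar H$ is infinite and therefore surjects onto $\mathbb Z$. The preimage in $G$ of the kernel of such a surjection is the desired $L$: it contains $\sqrt{G'}\supseteq G'$, hence is normal with $G/L\cong\mathbb Z$; it contains $H$; and $H$ is isolated in $L$ because it is isolated in $G$.

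With $L$ in hand, two facts complete the argument. (i) $\bigcap_i G^{p^i}L=L$: the image of $G^{p^i}L$ in $G/L\cong\mathbb Z$ is $p^i\mathbb Z$, and $\bigcap_i p^i\mathbb Z=0$. (ii) $\bigcap_i L^{p^i}H=H$, by the inductive hypothesis applied to the pair $(L,H)$, which is legitimate since $h(L)=h(G)-1$ and $H$ is isolated in $L$. Now take $x\in\bigcap_i G^{p^i}H$. By (i), $x\in L$; writing $x=w_ih_i$ with $w_i\in G^{p^i}$ and $h_i\in H$ gives $w_i=xh_i^{-1}\in G^{p^i}\cap L$. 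Here I would use the standard fact that in a finitely generated nilpotent group the pro-$p$ topology induces on every subgroup its own pro-$p$ topology: for each $m$ there is $n=n(m)$, with $n(m)\to\infty$, such that $G^{p^{n(m)}}\cap L\subseteq L^{p^m}$. Then $x=w_{n(m)}h_{n(m)}\in L^{p^m}H$ for every $m$, so $x\in\bigcap_m L^{p^m}H=H$ by (ii), which is what we wanted.

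The delicate parts are the construction of $L$ — this is precisely where the hypotheses are used, Theorem \ref{smith} ruling out the obstruction that $\bar H$ surjects onto $A$ — and the inclusion $G^{p^i}\cap L\subseteq L^{p^m}$, which I expect to be the real obstacle: it is a genuine separability property of finitely generated nilpotent groups rather than a statement about finite quotients, since $L$ typically has infinite index in $G$. It can be proved by induction on the nilpotency class, starting from the structure theory of finitely generated abelian groups. Alternatively, one can avoid singling out one $L$ by building an entire chain $H=H_0\lhd H_1\lhd\dots\lhd H_t=G$ with infinite cyclic quotients (such a chain exists because $H$ is isolated) and inducting on $t$, but the same control on $G^{p^i}\cap H_{t-1}$ is still required at the final step.
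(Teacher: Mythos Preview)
The paper does not actually prove this theorem: it is quoted in the preliminaries with the remark that ``the proofs can be found in \cite{Baumslag} and \cite{smith}'', so there is no in-paper argument to compare against. Your sketch is a faithful reconstruction of the standard proof one finds in Baumslag's notes: reduce to the case of infinite index using the isolator hypothesis, peel off an infinite cyclic quotient $G/L$ containing the image of $H$ (here Theorem~\ref{smith} is exactly what forces $\bar H$ to have infinite index in $G/\sqrt{G'}$), and induct on Hirsch length.

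The one point you correctly flag as delicate, namely that for each $m$ there is $n$ with $G^{p^{n}}\cap L\subseteq L^{p^{m}}$, is genuinely the heart of the matter and is not a formality. It is equivalent to saying that the pro-$p$ topology on $G$ restricts to the pro-$p$ topology on $L$, which holds for finitely generated nilpotent groups but requires its own inductive argument (on nilpotency class, or via a Mal'cev basis). Your proposal is honest about this and indicates a viable route; just be aware that in a fully self-contained write-up this lemma would itself take roughly as much work as the rest of the proof, so citing it (as the paper in effect does by deferring to \cite{Baumslag}) is the pragmatic choice.
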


\section{Free Nilpotent Groups}

We begin this section finding appropriate generators for isomorphic subgroups of $N_{r,c}$. First, consider $G = N_{r,c} = \left\langle g_1, \ldots, g_r \right\rangle$, $z_1, \ldots, z_r \in G'$ and $H = \left\langle g_1^{n_1}z_1, \ldots, g_r^{n_r}z_r \right\rangle$. As $H$ has finite index in $G$ modulo $G'$, it follows from Theorem \ref{smith} that $[G:H]$ is finite. Now, consider the map $\psi: G \longrightarrow H$ defined by $g_i^{\psi} = g_i^{n_i} z_i$, for all $i = 1, \ldots, r$. Then $\psi$ extends to an epimorphism. As $G$ and $H$ have the same Hirsch length, $\psi$ is also a monomorphism and we have that $H \simeq G$. In the opposite direction:

\begin{prop}\label{generators} 
Consider $G = N_{r,c}$ and $H$ a subgroup of $G$ which is isomorphic to $G$. Then there exists $g_1, \ldots, g_r \in G$, $z_1, \ldots, z_r \in G'$ and positive integers $n_1, \ldots, n_r$ such that $G = \left\langle g_1, \ldots, g_r \right\rangle$ and $H = \left\langle g_1^{n_1}z_1, \ldots, g_r^{n_r}z_r \right\rangle$.  
 \end{prop}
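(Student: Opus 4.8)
The plan is to pass to the abelianisation $G/G' \cong \mathbb{Z}^{r}$, invoke the elementary divisor theorem there, and lift back to $G$.

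The first step is to secure that $[G:H]$ is finite and to extract one structural identity. Since $H \cong G = N_{r,c}$, the two groups have the same (finite) Hirsch length; as $H$ is a subgroup of the $\mathfrak{T}$-group $G$ realising the full Hirsch length, $[G:H]$ is finite (see \cite{Baumslag}). Hence $\bar H := HG'/G'$ has finite index in $G/G' \cong \mathbb{Z}^{r}$, and so $\bar H$ is itself free abelian of rank $r$. Now the inclusion $H \hookrightarrow G$ induces a homomorphism $H/H' \to G/G'$ with image $\bar H$ and kernel $(H\cap G')/H'$. Because $H\cong G$, the group $H/H'$ is free abelian of rank $r$, so this is a surjection $\mathbb{Z}^{r} \twoheadrightarrow \bar H \cong \mathbb{Z}^{r}$, hence an isomorphism; therefore $H\cap G' = H'$.

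Next I would choose the integers $n_i$ and the generators $g_i$. Applying the elementary divisor (stacked basis) theorem to $\bar H \leq G/G'$ produces a basis $\bar g_{1},\ldots,\bar g_{r}$ of $G/G'$ and positive integers $n_{1},\ldots,n_{r}$ such that $\{n_{1}\bar g_{1},\ldots,n_{r}\bar g_{r}\}$ is a basis of $\bar H$. Pick lifts $g_{1},\ldots,g_{r}\in G$ of the $\bar g_{i}$. In a nilpotent group every maximal subgroup is normal of prime index and hence contains the derived subgroup, so $G'\leq \Phi(G)$; since $\langle g_{1},\ldots,g_{r}\rangle G' = G$ and $G$ is finitely generated, $\langle g_{1},\ldots,g_{r}\rangle$ lies in no maximal subgroup, so $G = \langle g_{1},\ldots,g_{r}\rangle$. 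Then for each $i$ we have $g_{i}^{n_{i}}G' = n_{i}\bar g_{i} \in \bar H = HG'/G'$, hence $g_{i}^{n_{i}} \in HG'$; writing $g_{i}^{n_{i}} = h_{i}w_{i}$ with $h_{i}\in H$ and $w_{i}\in G'$ and setting $z_{i} := w_{i}^{-1}\in G'$ gives $g_{i}^{n_{i}}z_{i} = h_{i}\in H$. To finish, I must check these $r$ elements generate $H$: by the identity of the previous step the map $H/H' \hookrightarrow G/G'$ is injective and sends $h_{i}H'$ to $g_{i}^{n_{i}}G' = n_{i}\bar g_{i}$; as $\{n_{i}\bar g_{i}\}$ is a basis of $\bar H$, which is the image of $H/H'$, the classes $h_{1}H',\ldots,h_{r}H'$ generate $H/H'$, and the Frattini argument applied inside the nilpotent group $H$ yields $H = \langle h_{1},\ldots,h_{r}\rangle = \langle g_{1}^{n_{1}}z_{1},\ldots,g_{r}^{n_{r}}z_{r}\rangle$.

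The one place where something genuinely happens is the identity $H\cap G' = H'$: it is exactly what prevents $\langle g_{1}^{n_{1}}z_{1},\ldots,g_{r}^{n_{r}}z_{r}\rangle$ from being a proper finite-index subgroup of $H$, and it relies on combining $H\cong G$ (which forces $H/H'$ to be free abelian of rank $r$) with the fact that a surjective endomorphism of $\mathbb{Z}^{r}$ is an isomorphism. The remaining ingredients — finiteness of $[G:H]$, the elementary divisor decomposition of $\bar H$ in $\mathbb{Z}^{r}$, and the standard fact that in a finitely generated nilpotent group a subset whose image generates the abelianisation generates the whole group — are all routine.
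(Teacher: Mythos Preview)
Your argument is correct and follows the same overall route as the paper: pass to the abelianisation, apply the elementary divisor theorem to $HG'/G' \leq G/G'$, lift the resulting basis, and use the identity $H\cap G' = H'$ together with the Frattini property of $G'$ in nilpotent groups to conclude that the lifted elements generate $H$.

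The only point worth flagging is how the key identity $H\cap G' = H'$ is obtained. You deduce it from a rank count: the natural map $H/H' \to G/G'$ has image $\bar H \cong \mathbb{Z}^r$ and $H/H' \cong \mathbb{Z}^r$, so the surjection onto $\bar H$ must be injective. The paper instead exploits the coincidence of the upper and lower central series in $N_{r,c}$, writing $H\cap G' = H\cap Z_{c-1}(G) \leq Z_{c-1}(H) = H'$ (the middle inclusion holds because a $(c-1)$-fold centraliser in $G$ is a fortiori one in $H$, and the last equality uses $H \cong N_{r,c}$). Your Hopfian-style argument is arguably more elementary since it avoids invoking the special central-series structure of free nilpotent groups; the paper's argument, on the other hand, packages the final step more compactly via the modular law ($H = KG'\cap H = (H\cap G')K = H'K = K$) rather than a second Frattini appeal inside $H$. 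Either way the substance is the same.
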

 
 \noindent {\it Proof:} As  $\left[ \frac{G}{G'}: \frac{HG'}{G'}\right]$ is finite, there exist $g_1, \ldots, g_r$ and positive integers $n_1, \ldots, n_r$ such that 
$\frac{G}{G'} = \left\langle G' g_1, \ldots, G' g_r\right\rangle$ and $\frac{HG'}{G'} = \left\langle G' g_1^{n_1}, \ldots, G' g_r^{n_r}\right\rangle$. So we have $G = \left\langle g_1, \ldots, g_r \right\rangle G' =  \left\langle g_1, \ldots, g_r \right\rangle$ and $HG' =  \left\langle g_1^{n_1}, \ldots, g_r^{n_r} \right\rangle G'$. We can choose $z_i \in G'$ such that $g_i^{n_i}z_i \in H$, for each $i = 1, \ldots, r$. Now, consider $K = \left\langle g_1^{n_1}z_1, \ldots, g_r^{n_r}z_r \right\rangle$. Then $K \leq H$, $KG' = HG'$ and
$$
H = HG' \cap H = KG' \cap H = (H\cap G') K,
$$
where the last equality follows from modular law.
But $H \cap G' = H \cap Z_{c-1}(G) = Z_{c-1}(H) = H'$ and $H= H'K$, that is, $H=K$. \hfill$\Box$ \\

Consider $n>1$ and $x_j$ a generator of a group $G =  \left\langle x_1, \ldots, x_r \right\rangle$. Let us count the number of times that $x_j$ appears in all basic commutators of weight $n$. We denote such number by $A_j(r,n)$. We will calculate
$$
A_j(r,n)  = \displaystyle{\sum_{\omega(c)=n}}\omega_j(c),$$
where the $c$'s on the subscript are the basic commutators over $x_1, \ldots, x_r$. Let $A_j(r,n,k)$ denote the number of basic commutators of weight $n$ where $x_j$ appears exactly $k$ times. By Theorem \ref{witt}, we have that  $A_j(r,n,k)$ doesn't depends on $j$, that is, 
$$
\begin{array}{ll}
A_j(r,n,k) & = 
\displaystyle{\sum_{ {n_1 + \cdots +n_r = n} \atop {n_j = k} }^{}}
M(n_1, n_2, \ldots, n_j, \ldots, n_r) \\
& = \displaystyle\sum_{k+n_2+ \ldots + n_r = n} M(k, n_2, \ldots, n_r). 
\end{array}
$$
Therefore, $A_j(r,n,k) = A(r,n,k)$ and
$$
A(r,n)  = A_j(r,n) = 
\displaystyle\sum_{k=1}^n k A(r,n,k).
$$
\begin{lem}\label{arn} 
Let $G$ be a group generated by $x_1, \ldots, x_r$. Then 
$$
A(r,n) = 
\displaystyle\sum_{\omega(c)=n} \omega_j(c) = 
\displaystyle\sum_{k=1}^n k \cdot
\left(
\displaystyle\sum_{k+n_2+ \ldots + n_r = n} M(k, n_2, \ldots, n_r)
\right),
$$
for all $\; j = 1, \ldots, r$, where $c$ runs over the set of all basic commutators of weight $n$ over $x_1, \ldots, x_r$.
 \end{lem}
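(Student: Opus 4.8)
The plan is to observe that essentially nothing needs to be proved beyond organizing the definitions and invoking Witt's formula. First I would recall the two quantities in play: $A_j(r,n) = \sum_{\omega(c)=n}\omega_j(c)$ counts, with multiplicity, the total number of occurrences of the generator $x_j$ across all basic commutators of weight $n$; and $A_j(r,n,k)$ counts the basic commutators of weight $n$ in which $x_j$ occurs exactly $k$ times, i.e. those $c$ with $\omega_j(c)=k$. Grouping the basic commutators of weight $n$ according to the value $k=\omega_j(c)\in\{1,\dots,n\}$ gives immediately
$$
A_j(r,n) \;=\; \sum_{k=1}^{n} k\cdot A_j(r,n,k).
$$
So the lemma reduces to identifying $A_j(r,n,k)$ with the stated sum of multinomial Witt numbers and checking that it does not depend on $j$.

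Next I would make the $j$-independence explicit. By Theorem \ref{basis}, a basis of $\gamma_n(F)/\gamma_{n+1}(F)$ is given by the basic commutators of weight $n$, and these split into the classes counted by $M(n_1,\dots,n_r)$ according to the multidegree $(\omega_1(c),\dots,\omega_r(c))$ with $n_1+\cdots+n_r=n$. Hence $A_j(r,n,k)$ is obtained by summing $M(n_1,\dots,n_r)$ over all compositions of $n$ whose $j$-th part equals $k$:
$$
A_j(r,n,k) \;=\; \sum_{\substack{n_1+\cdots+n_r=n\\ n_j=k}} M(n_1,\dots,n_r)
\;=\; \sum_{k+n_2+\cdots+n_r=n} M(k,n_2,\dots,n_r),
$$
the last equality being just a relabelling of the remaining parts (the summand $M$ is symmetric in its arguments, a symmetry visible directly from Witt's formula in Theorem \ref{witt}, since the formula depends on the multiset $\{n_1,\dots,n_r\}$ only through the factorials in the denominator and the divisors $d\mid n_i$). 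This shows $A_j(r,n,k)=A(r,n,k)$ is independent of $j$, and therefore so is $A_j(r,n)=A(r,n)$.

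Combining the two displays yields exactly the asserted identity
$$
A(r,n) \;=\; \sum_{\omega(c)=n}\omega_j(c) \;=\; \sum_{k=1}^{n} k\cdot\!\left(\sum_{k+n_2+\cdots+n_r=n} M(k,n_2,\dots,n_r)\right),
$$
valid for every $j=1,\dots,r$. The only point requiring a word of care — the "main obstacle", such as it is — is the claim that $M(n_1,\dots,n_r)$ depends only on the unordered partition and not on which coordinate carries which part; I would justify this by inspecting the closed form in Theorem \ref{witt}, noting that both the constraint $d\mid n_i$ (which one reads as $d$ dividing every $n_i$, equivalently $\gcd$) and the product of factorials are symmetric under permuting $(n_1,\dots,n_r)$. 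Everything else is bookkeeping, so I would keep the write-up to a few lines.
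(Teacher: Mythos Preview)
Your proposal is correct and follows essentially the same approach as the paper: the paper's ``proof'' is the paragraph immediately preceding the lemma, which defines $A_j(r,n,k)$, invokes Theorem~\ref{witt} to obtain $j$-independence, and then sums $k\cdot A(r,n,k)$ over $k$. Your write-up is slightly more explicit about why $M(n_1,\dots,n_r)$ is symmetric in its arguments, which is a reasonable addition but not a different route.
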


Now we calculate the index of the isomorphic subgroups of $N_{r,c}$. 
\begin{theorem} 
\label{index} 
Consider $G = N_{r,c} =  \left\langle g_1, \ldots, g_r \right\rangle$ and $H = \left\langle g_1^{n_1}z_1, \ldots, g_r^{n_r}z_r \right\rangle$ a subgroup of $G$ with $H \simeq G$, where $n_1, \ldots, n_r$ are positive integers and $z_1, \ldots, z_r \in G'$. Then 
$$
[G:H] = (n_1n_2 \cdots n_r)^{A_r^c}, \;\; \mbox{ where  } \;\;
A_r^c = \displaystyle\sum_{j=1}^c A(r,j).
$$
 \end{theorem}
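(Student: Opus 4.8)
The plan is to compute the index $[G:H]$ by filtering both groups through the lower central series of $G = N_{r,c}$ and taking a product of the indices of the induced subgroups on each factor $\gamma_j(G)/\gamma_{j+1}(G)$. Write $G_j = \gamma_j(G)$ and $H_j = H \cap G_j$. Since $\psi\colon G \to H$ (defined by $g_i^\psi = g_i^{n_i}z_i$) is an isomorphism and $z_i \in G'$, one checks that $\psi$ respects the lower central series in the sense that $H \cap G_j = H^{\psi}$ restricted appropriately; more precisely I would argue that $H_j G_{j+1}/G_{j+1}$ is the subgroup of the free abelian group $G_j/G_{j+1}$ (of rank $M_r(j)$, by Theorem \ref{basis} and the remark following it) generated by the images of the basic commutators of weight $j$ evaluated on $g_1^{n_1}z_1, \ldots, g_r^{n_r}z_r$. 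Modulo $G_{j+1}$, such a basic commutator $c$ of weight $j$ evaluated on the $g_i^{n_i}z_i$ equals $c(g_1,\ldots,g_r)^{n_1^{\omega_1(c)} n_2^{\omega_2(c)} \cdots n_r^{\omega_r(c)}}$ (the $z_i$-contributions vanish modulo $G_{j+1}$ because they raise the weight). Hence the matrix expressing the new basis in terms of the old one is diagonal, and
$$
\left[ \frac{G_j}{G_{j+1}} : \frac{H_j G_{j+1}}{G_{j+1}} \right] = \prod_{\omega(c) = j} n_1^{\omega_1(c)} \cdots n_r^{\omega_r(c)} = \prod_{i=1}^{r} n_i^{\sum_{\omega(c)=j} \omega_i(c)} = \prod_{i=1}^{r} n_i^{A(r,j)},
$$
using Lemma \ref{arn} and the fact that $\sum_{\omega(c)=j}\omega_i(c) = A(r,j)$ is independent of $i$.

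Next I would assemble these local indices into the global one. The standard fact is that if $H \leq G$ with $G$ nilpotent and $H_j = H \cap G_j$, then $[G:H] = \prod_{j=1}^{c} [G_j/G_{j+1} : H_j G_{j+1}/G_{j+1}]$, provided that $H_j G_{j+1}/G_{j+1}$ really is the image of $H_j$ and that the filtration of $H$ is compatible. This requires knowing that $H_j/H_{j+1}$ injects into $G_j/G_{j+1}$ with image exactly $H_jG_{j+1}/G_{j+1}$, which is immediate, and that $H_j = \gamma_j(H)$ — i.e. that $H \cap \gamma_j(G) = \gamma_j(H)$. This last point is where one uses that $H \simeq G$ with $H$ of full Hirsch length: since $H$ has the same Hirsch length as $G$ and is nilpotent of class $c$, the containment $\gamma_j(H) \leq H \cap \gamma_j(G)$ must be an equality of finite index, and a Hirsch-length count on each factor (both sides are free abelian of rank $M_r(j)$, the first because $H \simeq N_{r,c}$) forces equality level by level. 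Telescoping,
$$
[G:H] = \prod_{j=1}^{c} \prod_{i=1}^{r} n_i^{A(r,j)} = \prod_{i=1}^{r} n_i^{\sum_{j=1}^{c} A(r,j)} = (n_1 n_2 \cdots n_r)^{A_r^c}.
$$

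The main obstacle I anticipate is the bookkeeping around the compatibility of the two filtrations — specifically, verifying cleanly that $H \cap \gamma_j(G) = \gamma_j(H)$ and that the index is genuinely the product over the factors rather than merely bounded by it. One clean way to sidestep subtleties is to argue directly with the isomorphism $\psi$: because $\psi$ is an automorphism-like map sending generators $g_i$ to $g_i^{n_i}z_i$, it induces on each $\gamma_j(G)/\gamma_{j+1}(G)$ a linear map whose determinant is exactly $\prod_i n_i^{A(r,j)}$ (by the diagonal-matrix computation above, after choosing the basic-commutator basis), and the index of the image of such an injective endomorphism of a free abelian group of finite rank equals the absolute value of its determinant. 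Summing the exponents over $j = 1, \ldots, c$ then yields the formula. The routine computation of the determinant — i.e. that the change-of-basis matrix from $\{c(g^{n}z)\}$ to $\{c(g)\}$ modulo $\gamma_{j+1}$ is triangular with the stated diagonal — is exactly the content of the weight-count in Lemma \ref{arn}, so no further combinatorial work is needed.
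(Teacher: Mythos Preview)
Your proposal is correct and follows essentially the same approach as the paper: compute the index on each factor $\gamma_j(G)/\gamma_{j+1}(G)$ via the basic-commutator basis (obtaining $\prod_i n_i^{A(r,j)}$), use $H\cap\gamma_j(G)=\gamma_j(H)$, and multiply. The only cosmetic difference is that the paper packages this as an induction on $c$ (splitting off the bottom factor $\gamma_c(G)$ and invoking the result for $G/\gamma_c(G)\cong N_{r,c-1}$) rather than taking the product over all $j$ at once, and it justifies $\gamma_c(H)=H\cap\gamma_c(G)$ in one line from finite index (implicitly via $\gamma_c(G)=Z(G)$ and $Z(H)=\gamma_c(H)$) rather than via your Hirsch-length count.
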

\begin{proof}
If $c=1$, we have $H = \left\langle g_1^{n_1}, \ldots, g_r^{n_r} \right\rangle$  and $[G:H] = n_1 n_2 \cdots n_r = (n_1 n_2 \cdots n_r)^{A_1}$. Now suppose that the result is true for free nilpotent groups of rank $r$ and nilpotency class less than $c$. Using the induction hypothesis on $\frac{G}{\gamma_c(G)}$, we have that 
$[G: H\gamma_c(G)] = (n_1 n_2 \cdots n_r)^{A_r^{c-1}}$. Let us calculate $[\gamma_c(G): \gamma_c(H)]$. A basis for $\gamma_c(G)$ is formed by all basic commutators of weight $c$. Such basis can be written as $X_G = \{ c_i \; | \; i = 1, \ldots, m \}$, where $m = M_r(c)$. A basis for $\gamma_c(H)$ is 
$$
X_H = \left\{ 
c_i^{
     n_1^{\omega_1(c_i)} n_2^{\omega_2(c_i)} \cdots n_r^{\omega_r(c_i)}
    } \;\; \left| \;\; \right.
i = 1, \ldots, m 
\right\}.
$$
In this way,
$$
\begin{array}{ll}
[\gamma_c(G): \gamma_c(H)] & =
\displaystyle\prod_{i=1}^m  n_1^{\omega_1(c_i)} n_2^{\omega_2(c_i)} \cdots n_r^{\omega_r(c_i)}  \\ \\
& = 
 n_1^{A(r,c)} n_2^{A(r,c)} \cdots n_r^{A(r,c)} = (n_1n_2 \cdots n_r)^{A(r,c)}.
\end{array}
$$
Now, since $H$ is a subgroup of finite index in $G$, we have that $\gamma_c(H) = H \cap \gamma_c(G)$. It follows that \\
$$
\begin{array}{ll}
[G:H] &= [G:H\gamma_c(G)][\gamma_c(G): \gamma_c(H)]  
 = (n_1 \cdots n_r)^{A_r^{c-1}} (n_1 \cdots n_r)^{A(r,c)} = \\ \\
&= (n_1 \cdots n_r)^{A_r^c}.
\end{array}$$ 
\end{proof}


 An explicit formula for $A_r^c$ is as follows. 
 
 \begin{lem}
 For $r\geq 2$, an explicit formula for $A_r^c$ is:
$$
\displaystyle{
 A_r^c = \sum_{n=1}^c A(r,n) = \sum_{d=1}^c \mu(d) \left(\dfrac{r^{\left[ \frac{c}{d}\right]}-1}{r-1}   \right),
 }
 $$
 where $\left[ \dfrac{c}{d}\right]$ is the integer part of $\frac{c}{d}$.
 \end{lem}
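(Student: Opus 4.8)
The plan is to express the single-weight count $A(r,n)$ in closed form and then recognize $A_r^c$ as a sum of finite geometric series. First I would exploit the symmetry recorded in Lemma~\ref{arn}: since $A(r,n)=\sum_{\omega(c)=n}\omega_j(c)$ is independent of $j$, summing over $j=1,\dots,r$ yields
$$
r\,A(r,n)=\sum_{\omega(c)=n}\ \sum_{j=1}^{r}\omega_j(c)=\sum_{\omega(c)=n}\omega(c)=n\,M_r(n).
$$
Here I use that $\sum_{j=1}^{r}\omega_j(c)=\omega(c)$, which follows by an immediate induction on weight from $\omega_j(x_i)\in\{0,1\}$, $\omega(x_i)=1$, and the additivity of both $\omega$ and each $\omega_j$ under bracketing; and that there are exactly $M_r(n)$ basic commutators of weight $n$, by the definition of $M_r(n)$. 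Hence $A(r,n)=\frac{n}{r}\,M_r(n)$.

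Next I would substitute Witt's formula (Theorem~\ref{witt}), $M_r(n)=\frac1n\sum_{d\mid n}\mu(d)\,r^{n/d}$, to obtain
$$
A(r,n)=\frac1r\sum_{d\mid n}\mu(d)\,r^{n/d}=\sum_{d\mid n}\mu(d)\,r^{\frac nd-1},
$$
a sum of nonnegative integer powers of $r$ since $d\mid n$ forces $n/d\geq 1$. Summing over $n$ and interchanging the order of summation, I collect terms according to the divisor $d$: for fixed $d$ with $1\le d\le c$, the integers $n\in\{1,\dots,c\}$ divisible by $d$ are precisely $n=kd$ with $k=1,\dots,\left[\frac cd\right]$, and then $\frac nd-1=k-1$. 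Therefore
$$
A_r^c=\sum_{n=1}^{c}A(r,n)=\sum_{d=1}^{c}\mu(d)\sum_{k=1}^{\left[\frac cd\right]}r^{k-1}
=\sum_{d=1}^{c}\mu(d)\left(\frac{r^{\left[\frac cd\right]}-1}{r-1}\right),
$$
where the last step is the finite geometric series $\sum_{k=1}^{m}r^{k-1}=\frac{r^{m}-1}{r-1}$, legitimate precisely because $r\ge 2$, so that $r-1\neq 0$. This is the asserted identity.

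The argument is essentially bookkeeping, so no serious obstacle is expected; the only steps needing a moment of care are the identity $\sum_{j}\omega_j(c)=\omega(c)=n$ underlying the double count (which is what turns ``total occurrences of a generator across all weight-$n$ basic commutators'' into $n\,M_r(n)$) and the reindexing $n=kd$ when the order of summation is exchanged.
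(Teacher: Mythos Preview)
Your proof is correct and follows essentially the same route as the paper's: both use $A(r,n)=\frac{n}{r}M_r(n)$, substitute Witt's formula, interchange the order of summation via $n=kd$ with $1\le k\le\lfloor c/d\rfloor$, and sum the resulting geometric series. The only difference is that you supply the double-counting justification for $A(r,n)=\frac{n}{r}M_r(n)$, which the paper's proof invokes without comment.
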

\begin{proof} We have that
$$
\displaystyle{
\sum_{n=1}^c A(r,n) = \sum_{n=1}^c \dfrac{n}{r} M_r(n) = \dfrac{1}{r} \sum_{n=1}^c \sum_{d|n} \mu(d) r^{\frac{n}{d}}  = \dfrac{1}{r} \sum_{d=1}^c \sum_{\overset{d|n}{1 \leq n \leq c}}  \mu(d) r^{\frac{n}{d}}.
}
$$
Now, observe that $\{n ; d|n \mbox{ and  } 1\leq n \leq c\} = \left\{d,2d, \ldots , \left[ \dfrac{c}{d} \right] d \right\}.$ So we can write
$$
\displaystyle{
\sum_{\overset{d|n}{1 \leq n \leq c}}  \mu(d) r^{\frac{n}{d}} = \sum_{k=1}^{\left[ \frac{c}{d} \right]}  \mu(d) r^k = \mu(d) \sum_{k=1}^{\left[ \frac{c}{d} \right]}  r^k.
}
$$
Finally,
$$
\displaystyle{
 A_r^c = \dfrac{1}{r} \sum_{d=1}^c  \mu(d) \sum_{k=1}^{\left[ \frac{c}{d} \right]}  r^k = 
\sum_{d=1}^c  \mu(d) \sum_{k=1}^{\left[ \frac{c}{d} \right]}  r^{k-1} = 
\sum_{d=1}^c  \mu(d) \left( \dfrac{r^{\left[ \frac{c}{d} \right]}-1}{r-1} \right).}
$$
\end{proof}

Some virtual endomorphisms of the free abelian group $\mathbb{Z}^n$ produce recurrent transitive self-similar representations of $\mathbb{Z}^n$. The extension of these virtual endomorphisms to $\mathbb{R}^n$ tell us that the representation is finite-state if and only if its spectral radius is less than 1 (see \cite{NS}). In \cite{BK}, Bondarenko and Kravchenko extended this result to $\mathfrak{T}$-groups, which will allow us to get information about the automata generation of $N_{r,c}$: 
\begin{theorem}{\bf(Bondarenko, Kravchenko)} \label{BK}
Let $G$ be a $\mathfrak{T}$-group and let $f$ be a simple surjective virtual endomorphism of $G$. Then the transitive self-similar representation induced by $(G,f)$ is finite-state if and only if the spectral radius of $f$ is less than 1.
\end{theorem}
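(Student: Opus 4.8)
The plan is to carry over the Nekrashevych--Sidki analysis of self-similar free abelian groups to the Mal'cev completion $G^{\mathbb R}$ of the $\mathfrak T$-group $G$, a simply connected nilpotent real Lie group with Lie algebra $\mathfrak g$ inside which $G$ sits as a discrete subgroup. Since $[G:H]$ is finite, $H$ has the same completion; a finitely generated nilpotent group is Hopfian, so the surjection $f\colon H\to G$ between groups of equal Hirsch length is an isomorphism and extends to an automorphism $\tilde f$ of $G^{\mathbb R}$ with differential $L=\mathrm{d}\tilde f\in\operatorname{Aut}(\mathfrak g)$, for which $|\det L|=1/[G:H]<1$; the spectral radius of $f$ is $\rho(L)$. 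First I would record the reduction $\rho(L)<1\iff\rho\bigl(L|_{\mathfrak g/[\mathfrak g,\mathfrak g]}\bigr)<1$: the map $L$ respects the lower central filtration, and the iterated Lie bracket intertwines $L$ acting diagonally on $\bigl(\mathfrak g/[\mathfrak g,\mathfrak g]\bigr)^{\otimes k}$ with $L$ on $\gamma_k(\mathfrak g)/\gamma_{k+1}(\mathfrak g)$, so the eigenvalues on the $k$-th layer form a submultiset of the $k$-fold products of the eigenvalues of $L|_{\mathfrak g/[\mathfrak g,\mathfrak g]}$; hence $\rho(L)=\max_k\rho_k$ with $\rho_1\le\rho(L)$ and $\rho_k\le\rho_1^{\,k}$, which yields the equivalence.

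Next I would make the states intrinsic. With the transversal $T=\{t_1,\dots,t_m\}$ and the partial maps $\Phi_{ij}(x)=f\bigl(t_ixt_j^{-1}\bigr)$ occurring along $\mathcal T_m$, one has $Q(\varphi(g))=\varphi(\mathcal S(g))$, where $\mathcal S(g)\subseteq G$ is the smallest subset containing $g$ and closed under the maps $\Phi_{ij}$; since $Q(\varphi(g_1g_2))\subseteq Q(\varphi(g_1))Q(\varphi(g_2))$, the representation $\varphi$ is finite-state iff $\mathcal S(g)$ is finite for every $g$, equivalently, by discreteness of $G$ in $G^{\mathbb R}$, iff each $\mathcal S(g)$ is bounded. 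Each $\Phi_{ij}$ extends to $\tilde\Phi_{ij}(x)=\tilde f(t_i)\,\tilde f(x)\,\tilde f(t_j)^{-1}$ on $G^{\mathbb R}$, so a composition along a word $w$ of length $n$ takes the form $x\mapsto p_w\,\tilde f^{\,n}(x)\,q_w$ with $p_w,q_w$ products of factors $\tilde f^{\,k}(\text{bounded})$, $0\le k<n$. For the sufficiency direction, assuming $\rho(L)<1$, I would equip $\mathfrak g$ with a Lyapunov norm making $\|L\|<1$ and $G^{\mathbb R}$ with a compatible left-invariant homogeneous metric adapted to the lower central filtration; layer by layer one then obtains a contracting affine estimate $\|\tilde\Phi_{ij}(x)\|\le\|L\|\bigl(\|x\|+C\bigr)$, whence $\sup_w\|\tilde\Phi_w(g)\|<\infty$, every $\mathcal S(g)$ is bounded, and $\varphi$ is finite-state.

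For necessity I would argue the contrapositive: if $\rho(L)\ge1$ then, by the reduction, $L|_{\mathfrak g/[\mathfrak g,\mathfrak g]}$ has an eigenvalue $\lambda$ with $|\lambda|\ge1$, and here simplicity of $f$ is essential, since a digit set $\{\overline{t_it_j^{-1}}\}$ annihilated on the rational $\lambda$-primary subspace $V$ would make the $\lambda$-direction invisible to the dynamics, and one could then read off a nontrivial $f$-invariant normal subgroup, contradicting triviality of the $f$-core. When $|\lambda|>1$ this already suffices: taking $g\in G$ whose $V$-component is large enough that the bounded contributions of the $\tilde f(t_i)$ cannot reverse its growth, one checks that along a suitable path the $V$-component of $\tilde\Phi_w(g)$ increases by a factor tending to $|\lambda|$, so $\mathcal S(g)$ is unbounded, hence infinite by discreteness, and $\varphi$ is not finite-state. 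When $|\lambda|=1$ one must instead resonate the rotation on $V$, exploiting that the digit set is symmetric ($\overline{t_jt_i^{-1}}=-\,\overline{t_it_j^{-1}}$) to align the phases of the successive kicks and produce growth of linear order. I expect the main obstacle to be precisely this last direction: controlling the non-abelian corrections $p_w,q_w$ across the lower central layers so that growth created in the $\lambda$-layer is not absorbed by contributions flowing from lower-weight layers, together with the borderline case $|\lambda|=1$; the abelian prototype of the whole scheme is \cite{NS}.
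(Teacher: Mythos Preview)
The paper does not prove this theorem: it is stated with attribution to Bondarenko and Kravchenko and immediately cited to \cite{BK}, with no argument supplied. There is therefore nothing in the paper to compare your proposal against.

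That said, your sketch is broadly the right architecture for the Bondarenko--Kravchenko argument: pass to the Mal'cev completion, linearize on the Lie algebra, reduce the spectral condition to the abelianization via the lower-central grading, and analyze the orbit $\mathcal S(g)$ as an iterated affine system. The sufficiency direction (contraction $\Rightarrow$ finite-state) is essentially complete as you describe it. The necessity direction, however, is where you have a genuine gap: the claim that simplicity forces the digit set $\{\overline{t_it_j^{-1}}\}$ to have nonzero component on the $\lambda$-primary subspace is not established, and your handling of the case $|\lambda|=1$ is only a heuristic. In the actual result one exploits that simplicity of $f$ is equivalent to no eigenvalue of the abelianized map being an algebraic integer, which in particular rules out eigenvalues on the unit circle (roots of monic integer polynomials with all conjugates of modulus $\le 1$ are roots of unity, hence algebraic integers); so the borderline case $|\lambda|=1$ does not in fact occur under the hypothesis that $f$ is simple, and your phase-alignment argument is unnecessary once this is seen. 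If you want a self-contained proof, the missing ingredient is precisely this Kronecker-type lemma linking simplicity to the spectrum.
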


Now we are ready to prove that $N_{r,c}$ is faithfully represented as a recurrent transitive finite-state self-similar group.
\begin{thmA} 
\label{sc}
The free nilpotent group $N_{r,c}$ of class $c$ and finite rank $r$ is recurrent transitive self-similar. Furthermore, $N_{r,c}$ is an automata group. 
\end{thmA}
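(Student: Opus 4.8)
The plan is to produce an explicit simple recurrent virtual endomorphism of $G := N_{r,c}$ and then deduce the automaton property from Theorem \ref{BK}. Write $G = \langle g_1,\dots,g_r\rangle$, fix an integer $n \ge 2$, and put $H = \langle g_1^{n},\dots,g_r^{n}\rangle$. By the discussion preceding Proposition \ref{generators}, the rule $g_i \mapsto g_i^{n}$ extends to an epimorphism $\psi\colon G \to H$, and $\psi$ is injective because $G$ and $H$ have the same Hirsch length; hence $\psi$ is an isomorphism. By Theorem \ref{index} we have $[G:H] = n^{rA_r^c} \ge 2$, and $f := \psi^{-1}\colon H \to G$ is a bijective virtual endomorphism, in particular recurrent. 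The induced representation $\varphi\colon G \to \mathcal{A}_m$ with $m=[G:H]$ acts transitively on the first level, since the underlying permutation action of $G$ is its action on the right cosets of $H$. So the heart of the matter is the simplicity of $f$.

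To see that $f$ is simple, I would take an arbitrary subgroup $K \le H$ with $K^{f} \le K$; the $f$-core of $H$ is such a subgroup, so it is enough to show $K = 1$. Since $f = \psi^{-1}$ and $K \le H = \psi(G)$, the inclusion $f(K) \subseteq K$ is equivalent to $K \subseteq \psi(K)$, and iterating the injective homomorphism $\psi$ gives $K \subseteq \psi(K) \subseteq \psi^{2}(K) \subseteq \cdots$. Hence, for every $i \ge 1$,
\[
K \subseteq \psi^{i}(K) \subseteq \psi^{i}(G) = \langle g_1^{n^{i}},\dots,g_r^{n^{i}}\rangle \subseteq G^{n^{i}}.
\]
Choosing a prime $p$ with $p \mid n$ we have $p^{i}\mid n^{i}$, so $G^{n^{i}} \subseteq G^{p^{i}}$, and therefore $K \subseteq \bigcap_{i\ge 1} G^{p^{i}}$. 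As $N_{r,c}$ is torsion-free, the trivial subgroup is isolated in $G$, so Theorem \ref{baumslag} (applied with that subgroup) gives $\bigcap_{i\ge 1}G^{p^{i}} = 1$. Thus $K = 1$, $f$ is simple, $\varphi$ is faithful, and $N_{r,c}$ is recurrent transitive self-similar.

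For the second assertion I would invoke Theorem \ref{BK}: since $G$ is a $\mathfrak{T}$-group and $f$ is a simple surjective virtual endomorphism, it suffices to check that the spectral radius of $f$ (the largest modulus of an eigenvalue of the linear map induced by $f$ on the real Mal'cev completion of $G$) is less than $1$. Since $\psi$ acts as multiplication by $n$ on $G/G'$, and the commutator brackets on the lower central quotients are bilinear, $\psi$ induces multiplication by $n^{j}$ on the free abelian layer $\gamma_j(G)/\gamma_{j+1}(G)$ for $j = 1,\dots,c$. Hence, in a basis adapted to the lower central series, the automorphism induced by $f = \psi^{-1}$ is block-triangular with diagonal blocks $n^{-j}\,\mathrm{Id}$; all its eigenvalues have modulus $n^{-j} \le n^{-1} < 1$, so the spectral radius of $f$ equals $n^{-1} < 1$. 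By Theorem \ref{BK} the representation $\varphi$ is finite-state, and since $N_{r,c}$ is finitely generated and $\varphi$ is faithful, $\varphi(G) \cong N_{r,c}$ is an automata group.

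The step I expect to be the main obstacle is the simplicity of $f$: the decisive observation is that $K^{f}\le K$ forces $K$ into $\psi^{i}(G)$ for every $i$, and since $\psi^{i}(G)$ sits inside the verbal subgroup $G^{p^{i}}$ for any prime $p\mid n$, Baumslag's theorem with the trivial subgroup finishes it. The remaining pieces --- the index from Theorem \ref{index}, transitivity from the coset construction, and the eigenvalue computation feeding Theorem \ref{BK} --- are routine; the only non-trivial input among them is the standard fact that $\psi$ scales the $j$-th lower central quotient by $n^{j}$, which follows from the bilinearity of the brackets.
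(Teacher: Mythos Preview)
Your argument is correct and follows essentially the same strategy as the paper: exhibit an isomorphism $G\to H$ onto a finite-index subgroup, take its inverse as the virtual endomorphism, prove simplicity by trapping any $f$-invariant subgroup inside $\bigcap_i G^{p^i}=1$ via Theorem~\ref{baumslag}, and then invoke Theorem~\ref{BK} for the finite-state property. The only difference is cosmetic: the paper uses the cyclic isomorphism $g_i\mapsto g_{i+1}^{n_{i+1}}$ (with possibly distinct exponents $n_1,\dots,n_r$), whereas you use the diagonal map $g_i\mapsto g_i^{n}$, which makes both the intersection argument and the spectral-radius computation a touch simpler.
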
 
\begin{proof}
Consider $G = N_{r,c} =  \left\langle g_1, \ldots, g_r \right\rangle$ and $H = \left\langle g_1^{n_1}z_1, \ldots, g_r^{n_r}z_r \right\rangle \leq G$ with $H \simeq G$, where $n_1, \ldots, n_r$ are positive integers and $z_1, \ldots, z_r \in G'$. We can consider $H = \left\langle g_1^{n_1}, \ldots, g_r^{n_r} \right\rangle$, as the index remains the same, by Theorem \ref{index}. Consider the map $f:G \longrightarrow H$ defined by $g_i^f = g_{i+1}^{n_{i+1}}$ for $i = 1, \ldots, r-1$ and $g_r^f =  g_1^{n_1}$. Then $f$ extends to an epimorphism and therefore $f$ is an isomorphism. Putting $n = n_1 n_2 \cdots n_r$, we have $g_j^{f^{kr}} = g_j^{n^k}$ for all $j = 1,
\ldots, r$ and  $k \geq 1$. Thus  $G^{f^{kr}} \leq G^{n^k}$ for all  $k \geq 1$ and, by Theorem \ref{baumslag},  
$$
\bigcap_{i \geq 1} G^{f^i} \leq
\bigcap_{k \geq 1} G^{f^{kr}} \leq
\bigcap_{k \geq 1} G^{n^k} = 1.
$$
Now, the triple $(G,H,f^{-1})$  provides us a representation $\varphi: G \to Aut(\mathcal{T}_n)$, where $n$ is the index of $H$ in $G$. As ker$\varphi$ is $f^{-1}$-invariant, it follows that ker$\varphi \leq 
($ker$\varphi)^{f^i}$, for all $i \geq 1$. Thus
$$
\mbox{ker}\varphi \leq 
\bigcap_{i \geq 1} (\mbox{ker}\varphi)^{f^i} \leq
\bigcap_{i \geq 1} G^{f^i}  = 1
$$
and the representation is faithful.

Now, lifting the virtual endomorphism $f^{-1}:H \longrightarrow G$, we obtain the matrix
$$
[f^{-1}] =  \left(
\begin{array}[pos]{cccccc}
0&0&\ldots& 0  & \frac{1}{n_1} \\
\frac{1}{n_2}&0& \ldots & 0&0 \\
0& \frac{1}{n_3}& \ldots & 0&0 \\
\vdots& \vdots & \ldots & \vdots & \vdots \\
0 & 0 & \ldots & \frac{1}{n_r} & 0 
	\end{array}
\right).
$$
The characteristic polynomial of $[f^{-1}]$ is $t^r - \dfrac{1}{n_1 n_2 \cdots n_r}$. As $\left|t^r \right| = \dfrac{1}{n_1 n_2 \cdots n_r}< 1$, follows that $|t|<1$ and thus the spectral radius of $[f^{-1}]$ is less than 1. So, by Theorem \ref{BK}, the representation is finite-state.
\end{proof}

\begin{ex} 
Let $G = N_{2, r} = \left\langle g_1, g_2 \ldots, g_r \right\rangle$ and $H=\left\langle g_1^2, g_2 \ldots, g_r \right\rangle \leq G$ with $H \simeq G$. Then the index of $H$ in $G$ is $[G:H]=2^r$. A transversal of $H$ in $G$ is the set of the elements
$1, g_1$ and all the products of the form
$$
\displaystyle{g_1^i\prod_{2\leq j_1< j_2 < ... < j_{k}\leq n}[g_1,g_{j_1}][g_1,g_{j_2}]...[g_1,g_{j_{k}}]}, 
$$
where $i=0,1$ and $1 \leq k \leq r-1$.

Note that $T$ has 
$$ \begin{array}[pos]{rl}
2+ 2\left( \sum_{j=1}^{r-1} {r-1 \choose j} \right) = & 2\left( \sum_{j=1}^{r-1} {r-1 \choose j} +1\right) 
= 2\left( \sum_{j=0}^{r-1} {r-1 \choose j}\right) =  2^r
\end{array} $$ 
elements.
In the particular case $r=3$, we have $G=N_{2,3}=\langle g_1,g_2,g_3\rangle$, $H=\langle g_1^2,g_2,g_3\rangle$ and 
$$T=\{1,g_1,[g_1,g_2],[g_1,g_3],g_1[g_1,g_2],g_1[g_1,g_3],[g_1,g_2][g_1,g_3],g_1[g_1,g_2][g_1,g_3]\}$$ Define the endomorphism $f:H \to G$ that extends the map 
\begin{eqnarray*}
g_{1}^{2}\mapsto g_{3}, \,\, g_{2}\mapsto g_1, \,\, g_{3}\mapsto g_2.
\end{eqnarray*}
With respect to this data, the transitive self-similar representation of $G$ is 
$$G^\varphi \simeq \langle \alpha, \beta, \gamma \rangle,$$
where 
$$\alpha=(e, \gamma, e,e, \gamma,\gamma, e, \gamma)(12)(35)(46)(78), $$
$$\beta=(\alpha,\alpha,\alpha,\alpha,\alpha[\gamma,\alpha],\alpha,\alpha,\,\alpha\gamma[\gamma,\alpha])(25)(68), $$
$$\gamma= (\beta,\beta,\beta,\beta,\beta,\beta[\gamma,\beta],\beta,\beta[\gamma,\beta])(26)(58). $$
\end{ex}


\section{Free Metabelian groups}

The following proposition extends Theorem 1 of \cite{DS1} and has an analogous proof. For the reader’s convenience we supply
a proof.

\begin{prop} \label{4.1}
Let $G$ be a self-similar metabelian group and let $A$ be an abelian subgroup of $G$ such that 
\begin{itemize}
    \item[(i)] $G' \leqslant A$; 
    \item[(ii)]$C_{A}(g) = 1$ for any $g \in G \setminus A$;
    \item[(iii)] There exists $B \leqslant A$ such that $A = B^{G/A} = \oplus_{q \in G/A} B^{q}$.
\end{itemize}
If $G/A$ is torsion free then $A$ is a torsion group of finite exponent. 
\end{prop}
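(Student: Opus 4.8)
The plan is to realize self-similarity of $G$ by a virtual endomorphism, show that it essentially restricts to a virtual endomorphism of $A$, and then exploit the induced-module shape of $A$ together with the triviality of the $f$-core; this mirrors the proof of Theorem~1 of \cite{DS1}, which the present statement is meant to extend.

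First I would invoke the Nekrashevych--Sidki correspondence: self-similarity gives a subgroup $H\leqslant G$ with $m:=[G:H]<\infty$ and a virtual endomorphism $f\colon H\to G$ whose $f$-core is trivial, and, replacing $H$ by its normal core in $G$ (which can only shrink the $f$-core), I may also assume $H\trianglelefteq G$. Then I would record the structural consequences of the hypotheses. By (i), $A\trianglelefteq G$ and $Q:=G/A$ is abelian, and it is torsion free; hence every torsion element of $G$ already lies in $A$, so the torsion subgroup $T$ of $A$ equals the full torsion subgroup of $G$ and is normal in $G$, and moreover $\mathbb{Z}[Q]$ is an integral domain, being the group ring of a bi-orderable group. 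By (ii), $C_G(A)=A$, so $Q$ acts faithfully on $A$, and every abelian subgroup of $G$ not contained in $A$ meets $A$ trivially, hence embeds into the torsion-free group $Q$. By (iii), the normal subgroups of $G$ contained in $A$ are exactly the $\mathbb{Z}[Q]$-submodules of $A=\mathrm{Ind}_1^{Q}B=\bigoplus_{q\in Q}B^{q}$.

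Next I would analyse $f$ on $A\cap H$, which is a $\mathbb{Z}[Q]$-submodule of finite index in $A$. From $[H,A\cap H]\leqslant G'\cap H\leqslant A\cap H$ and applying $f$ one gets $[f(H),f(A\cap H)]\leqslant G'\leqslant A$; combined with the dichotomy of the previous paragraph, a short argument shows that either $f$ maps $A\cap H$ into $A$, so that it restricts to a virtual endomorphism $f_A\colon A\cap H\to A$ of the abelian group $A$, or else $f(A\cap H)$ centralizes $f(H)$, forcing $f(H)\cap A=1$ and $T\cap H\leqslant\ker f$; in the latter situation $T\cap H$ is normal in $G$, contained in $H$, and $f$-invariant, hence trivial, so $T$ is finite. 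Having reduced matters to the case where $f_A$ exists, it then suffices to prove that $B$ is torsion of bounded exponent, which I would do by contradiction: if $B$ — and so $A$ — had an element of infinite order or elements of arbitrarily large order, then, starting from the finite-index $G$-normal submodule $A\cap H$ and iterating $f_A$ while passing to $G$-normal cores, one would build a descending chain of $\mathbb{Z}[Q]$-submodules; the content of (iii) and of the domain property of $\mathbb{Z}[Q]$ is that this chain, subject to the $f$-stability condition and $G$-normality, cannot intersect in $1$ when $B$ is unbounded, and hence produces a nontrivial normal $f$-invariant subgroup of $G$ contained in $H$, contradicting triviality of the $f$-core. This forces $B$, and therefore $A$, to be torsion of bounded exponent.

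The hard part will be exactly this last step. The restriction $f_A$ is merely a homomorphism of abelian groups: it need not be $\mathbb{Z}[Q]$-linear, and $f$ need not be recurrent, so $f(H)$ (and hence the image of $f_A$) may be small, which makes it delicate to track how $f_A$ interacts with the $Q$-grading $A=\bigoplus_{q}B^{q}$ and to show that the descending chain of $G$-normal $f$-stable submodules cannot collapse. Hypotheses (ii), (iii) and the domain property of $\mathbb{Z}[Q]$ — the latter valid for an arbitrary torsion-free abelian $Q$, not just a free one, which is where the extension beyond \cite{DS1} lives — are precisely what keep the relevant annihilators and intersections nonzero and make the bookkeeping go through; by comparison, the virtual-endomorphism reduction and the structural facts above are routine.
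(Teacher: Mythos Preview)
Your reduction to a simple $f\colon H\to G$ with $H\trianglelefteq G$ and your dichotomy on whether $f(A\cap H)\leqslant A$ are sound and parallel the paper's Step~1. Your Case~2, however, stops short: from $f(H)\cap A=1$ you deduce only that the torsion part $T$ of $A$ is finite, which does not yet say that $A$ itself is torsion of bounded exponent. This is fixable --- $f(H)\cap A=1$ makes $f(H)$ abelian, so $[H,H]$ lies in the $f$-core and vanishes, and then any $h\in H\setminus A$ gives $A\cap H\leqslant C_A(h)=1$, forcing $|A|\le m$ --- but as written it is incomplete. The paper sidesteps this case entirely by assuming $A^m\ne1$ at the outset and showing that Case~1 must then hold.

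The substantive gap is in Case~1. Your plan is to iterate $f_A$, pass to $G$-normal cores, and argue that the resulting descending chain of $\mathbb{Z}[Q]$-submodules has nontrivial intersection ``when $B$ is unbounded'', invoking only that $\mathbb{Z}[Q]$ is a domain. You give no mechanism for this, and none is apparent: descending chains of nonzero submodules over a domain routinely intersect in zero, and nothing you have said distinguishes the present situation. You yourself flag this as the hard step, but the proposal contains no actual idea for carrying it out.

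The paper's argument is different in kind. Rather than a chain, it exhibits a single explicit normal subgroup, $A^m$, and proves directly that $(a^m)^f\in A^m$ for every $a\in A$; simplicity of $f$ then gives $A^m=1$. The device is a support-separation trick built on (iii). Fix a transversal $\{c_1,\dots,c_r\}$ of $A_0=A\cap H$ in $A$; the finitely many elements $(c_i^m)^f$ sit in some fixed $\bigoplus_t B^{x_t}$, while for a given $a$ the element $(a^m)^f$ sits in some $\bigoplus_u B^{z_u}$. For $1\ne q\in Q$ one checks that $z:=(q^m)^f$ is nontrivial in $Q$ (otherwise $A^{m(q^m-1)}$ lands in the $f$-core), and torsion-freeness of $Q$ then furnishes $k$ with $\{z^k z_u\}\cap\{x_t\}=\emptyset$. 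Writing $a^{q^{mk}}c_{i_k}^{-1}\in A_0$ for a suitable $i_k$ and using Step~1, the product $(a^m)^{f z^k}\cdot(c_{i_k}^{-m})^f$ is an $m$-th power in $A$; since its two factors have disjoint $B^q$-supports, each separately lies in $A^m=\bigoplus_q (B^m)^q$, whence $(a^m)^f\in A^m$. This concrete finite-support bookkeeping is precisely what your unspecified descending-chain argument is missing.
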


\begin{proof}
Identify $G/A$ with $Q$. Let $f:H \to G$ be a simple virtual endomorphism where $[G:H] = m$. We will prove the proposition in four steps. Suppose by contradiction that $A^{m} \neq 1$.

\begin{enumerate}
    \item If $A_0=H \cap A$, then $A_0^f \leqslant  A$. \\
    
    Since $[A^m, Q^m]$ is normal in $G$, it follows that $[A^m, Q^m]^f \neq 1$. Thus $1 < [A^m, Q^m]^f \leqslant A_{0}^f \cap A$ and $A_{0}^f \cap A$ is central in $AA_{0}^{f}$. Since $C_{A}(g) = 1$ for any $g \in G \setminus A$, we have $A_{0}^f \leqslant A$. \\  
    
    \item For each non-trivial $q \in Q$  and $x_1 , ..., x_t$, $z_1 , ..., z_l \in  Q$, there exists $k$ integer such that
    $$ q^k\{z_1,...,z_l\}\cap \{x_1,...,x_t\} = \emptyset.$$ \\
    
    It is enough to prove that the set $\{k \in \mathbb{Z} \mid q^kz_j\cap \{x_1,...,x_t\}\} \neq \emptyset$ is finite for each $j=1,...,l$. If it is false, there are $j$ and distinct integers $k_1, k_2$ satisfying $$q^{k_1}z_j=q^{k_2} z_j.$$ Then $q^{k_1-k_2}=1$, but $Q$ is torsion-free and we have a contradiction. \\

    \item  If $q \in Q$ is nontrivial, then $(q^m)^f$ is nontrivial. \\
    
    Let $a \in A$ and suppose by contradiction that $(q^m)^f$ is trivial. Then
    
    $$(a^{-m}a^{mq^m})^f = (a^{-m})^f(a^{m})^{f(q^m)^f} = 1.$$
    Thus $A^{m(q^m-1)}$ is a normal subgroup of $G$ contained in the kernel of $f$, a contradiction. \\
    
    \item The subgroup $A^m$ is $f$-invariant. \\

    Since $[G:H]=m$, $A_0$ has finite index in $A$. Consider a transversal $T=\{c_1,...,c_r\}$ of $A_0$ in $A$ and fix $a \in A$.
    
    Since $c_i^m \in A_0$ and $a^m \in A_0$ there exist $x_1,...,x_t,z_1,...,z_l \in Q$  such that 
    $$\langle (c_i^m)^f| i=1,...,r\rangle \,\,\, \leqslant B^{x_{1}} \oplus ... \oplus B^{x_{t}} \text{ and } \langle (a^m)^f\rangle \leqslant B^{z_{1}} \oplus ... \oplus B^{z_{l}}.$$
    
    For each $k\in \mathbb{Z}$, define $i_k\in \{1,...,r\}$ such that $a^{x^{mk}}c_{i_k}^{-1} \in A_0$ (it is possible because $T$ is a transversal of $A_0$ in $A$). Now, $$\left((a^{q^{mk}}c_{i_k}^{-1})^m\right)^f = \left((a^{q^{mk}}c_{i_k}^{-1})^f\right)^m  \in A.$$
    The last equality follows from step 1. because $a^{q^{mk'}}c_{i_k}^{-1} \in A_0$. 
    
   But $A$ is abelian and so $$(a^{q^{mk}}c_{i_k}^{-1})^m=a^{mq^{mk}}c_{i_k}^{-m}.$$ Thus,
   
   $$\left((a^{q^{mk}}c_{i_k}^{-1})^m\right)^f = \left(a^{mq^{mk}}\right)^f\left(c_{i_k}^{-m}\right)^f=\left(a^{m}\right)^{{f}z^k}\left(c_{i_k}^{-m}\right)^f,$$
   
   where $z=(q^m)^f$, which is non trivial by step 3. 
   
   By step 2, we have that there is $k'$ such that 
   $$\{z^{k'}z_1,...,z^{k'}z_l\} \cap \{x_1,...,x_t\} = \emptyset,$$ 
  thus,
   
   $$S := B^{x_{1}} \oplus ... \oplus B^{x_{t}} \oplus B^{z_{1} z^{k'}} \oplus ... \oplus B^{z_{l} z^{k'}}.$$
   
   Then 
   $\left(a^{m}\right)^{{f}z^{k'}}\left(c_{ik}^{-m}\right)^f \in A^m \cap S$, and 
   $$\left(a^{m}\right)^{{f}z^{k'}} \in \bigoplus_{u=1}^{l}B^{mz^{k'}z_u} \leqslant A^m $$
   and we conclude that $(a^m)^f \in A^m$.
\end{enumerate}

Therefore $A^{m} = 1$, a final contradiction.

\end{proof}

\begin{thmB}
The free metabelian group $\mathbb{M}_{r}$ of rank $r \geq 2$ is not transitive self-similar.
\end{thmB}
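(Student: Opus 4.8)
The plan is to argue by contradiction and feed the data into Proposition~\ref{4.1}. Suppose $G:=\mathbb{M}_{r}$ is transitive self-similar. By the Nekrashevych--Sidki correspondence recalled in the introduction there is then a finite-index subgroup $H\leqslant G$ together with a simple virtual endomorphism $f\colon H\to G$, which is precisely the setting of Proposition~\ref{4.1}. I would take $A:=G'=\mathbb{M}_{r}'$. This is in fact essentially the only option: $A$ is abelian because $G$ is metabelian, and if $A$ were strictly larger then some $g\in A\setminus G'$ would centralise $G'$; but conjugation by $g$ acts on the $\mathbb{Z}[\mathbb{Z}^{r}]$-module $G'$ as multiplication by $\bar g-1\neq 0$, and $G'$ is a \emph{faithful} module over this (domain) group ring, so that is impossible. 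Since $G/A\cong\mathbb{Z}^{r}$ is torsion free and hypothesis (i) is automatic, everything comes down to checking (ii) and (iii).

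For (ii) I would invoke the module structure of $A=\mathbb{M}_{r}'$. Write $R=\mathbb{Z}[G/A]=\mathbb{Z}[\mathbb{Z}^{r}]$, a Laurent polynomial ring over $\mathbb{Z}$, hence an integral domain; by the Magnus embedding $\mathbb{M}_{r}'$ is realised as the kernel of the $R$-linear map $R^{r}\to R$, $e_{i}\mapsto\bar x_{i}-1$, and is in particular a torsion-free $R$-module. For $g\in G\setminus A$ the image $\bar g$ of $g$ in $\mathbb{Z}^{r}$ is nontrivial, so $\bar g-1\neq 0$ in $R$; since $R$ is a domain and $A$ is torsion free, multiplication by $\bar g-1$ — which is exactly how $g$ acts on $A$ by conjugation — is injective, whence $C_{A}(g)=1$. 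That gives (ii).

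The main obstacle is (iii): one has to exhibit $B\leqslant A$ with $A=\bigoplus_{q\in G/A}B^{q}$, i.e.\ $A$ must be a free $R$-module, with $B$ spanning a $\mathbb{Z}$-basis. For $r=2$ this is clean: solving the syzygy equation $(\bar x_{1}-1)a+(\bar x_{2}-1)b=0$ over the UFD $R$ (the two coefficients being coprime) gives $\mathbb{M}_{2}'=R\cdot(\bar x_{2}-1,\,-(\bar x_{1}-1))\cong R$, a free module of rank one generated by $[x_{1},x_{2}]$, so $B=\langle[x_{1},x_{2}]\rangle\cong\mathbb{Z}$ works. For $r\geqslant 3$, though, $\mathbb{M}_{r}'$ is a higher syzygy of the augmentation ideal and is not $R$-free, while the faithfulness remark above forces $A=\mathbb{M}_{r}'$; so (iii) cannot be met for $G=\mathbb{M}_{r}$ as it stands, and I would instead descend to a suitable quotient or subgroup on which the relevant abelian normal subgroup does carry an induced-module structure — a copy of $\mathbb{Z}\wr\mathbb{Z}$ being the natural target, with its lamp group $\mathbb{Z}^{(\mathbb{Z})}=\bigoplus_{q\in\mathbb{Z}}B^{q}$ generated over $\mathbb{Z}[\mathbb{Z}]$ by a single lamp — and transport the transitive self-similar structure across. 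Making this last step rigorous is, I expect, where the real work lies, since transitive self-similarity does not in general descend to subgroups or quotients, and transitivity cannot simply be dropped: $\mathbb{M}_{r}$ is already self-similar by Brunner--Sidki~\cite{BS}.

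Granting (i)--(iii), Proposition~\ref{4.1} would force $A=\mathbb{M}_{r}'$ to be a torsion group of finite exponent; but for $r\geqslant 2$ the derived subgroup $\mathbb{M}_{r}'$ is a nontrivial free abelian group, hence torsion free, a contradiction, and so $\mathbb{M}_{r}$ is not transitive self-similar. In short, the routine parts are (i), (ii), and the torsion-freeness obstruction to the conclusion; the crux is (iii) — transparent when $r=2$, and requiring a genuine reduction argument when $r\geqslant 3$.
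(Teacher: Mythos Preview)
Your analysis is accurate up to the point where you isolate hypothesis (iii) as the crux, and your treatment of $r=2$ is correct: $\mathbb{M}_2'$ is indeed free of rank one over $\mathbb{Z}[\mathbb{Z}^2]$, so Proposition~\ref{4.1} applies directly. For $r\geqslant 3$, however, your proposal has a genuine gap. You correctly observe that $\mathbb{M}_r'$ is not a free $R$-module, so (iii) fails, and you propose to descend to a copy of $\mathbb{Z}\wr\mathbb{Z}$; but you also correctly note that transitive self-similarity does not pass to subgroups or quotients, and you do not supply the ``reduction argument'' you anticipate needing. As it stands the proof is incomplete for $r\geqslant 3$.

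The paper resolves this differently and more directly: it does not try to force (iii) to hold for $A=G'$, nor does it pass to a smaller group. Instead it uses the Magnus embedding $\mathbb{M}_r\hookrightarrow \mathcal{B}\wr\mathcal{Q}\cong\mathbb{Z}^r\wr\mathbb{Z}^r$ and observes that $A=G'$ sits inside the base group $\mathcal{B}^{\mathcal{Q}}=\bigoplus_{q\in\mathcal{Q}}\mathcal{B}^{q}$ of the ambient wreath product, which \emph{does} carry the required direct-sum decomposition. Steps 1--3 of Proposition~\ref{4.1} only use (i) and (ii), so they go through for $G=\mathbb{M}_r$; step~4 is then rerun verbatim with $\mathcal{B}$ in place of $B$, using the decomposition of $\mathcal{B}^{\mathcal{Q}}$ rather than of $A$ itself, to conclude that $A^m$ is $f$-invariant. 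The point is that the proof of step~4 never needed $A$ to be the whole direct sum --- only that every finite subset of $A$ is supported on finitely many summands $\mathcal{B}^{x_i}$, and that conjugation by elements of $Q$ permutes those summands. So the right fix is not to shrink $G$ but to enlarge the ambient abelian group in which the support argument is carried out.
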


\begin{proof}
Let $\mathcal{B} = \langle a_{1}, ..., a_{r} \rangle$ and $\mathcal{Q} = \langle q_{1}, ..., q_{r} \rangle$ be two free abelian groups of rank $r$. Then $\mathcal{B} \wr \mathcal{Q} \simeq \mathbb{Z}^r \wr \mathbb{Z}^r$ and by Magnus embedding of wreath products into $2 \times 2$ matrices $\mathbb{M}_{r} \simeq G = \langle a_{1}q_{1}, ..., a_{r}q_{r} \rangle \leq \mathcal{B} \wr \mathcal{Q}$, according \cite{ReSo}.

Let  $f: H \rightarrow G \leq \mathcal{B} \wr \mathcal{Q}$ be a simple virtual endomorphism where $[G : H] = m$. Let $A = G'$, $Q = G / A$, $A_{0} = A \cap H$ and  $T=\{c_1, ..., c_{m_0}\}$ a transversal of $A_0$ in $A$, with $m_{0} = [A:A_{0}]|m$. Since $G$ satisfies conditions $(i)$ and $(ii)$ of Proposition \ref{4.1}, the steps $1$, $2$, and $3$ of its proof follow.

Since $A = \langle [a_iq_i, a_jq_j] \mid i, j = 1, ..., r \rangle^G$ and
$$[a_iq_i, a_jq_j]^{bq} = ([a_i, q_{j}][q_{i}, a_{j}])^{bq} = [a_i, q_{j}]^{q}[q_{i}, a_{j}]^{q}$$
for any $b \in \mathcal{B}^{\mathcal{Q}}$ and any $q \in \mathcal{Q}$, follows that $A$ is a normal subgroup of $\mathcal{B} \wr \mathcal{Q}$ and $A^Q = A^{G \setminus A} = A^{\mathcal{Q}}$. Since $c_i^m \in A_0$ and $[a_i q_i, a_j q_j]^m \in A_0$ there exist $x_1,...,x_t,z_1,...,z_l \in Q$  such that 
    $$\langle (c_i^m)^f| i=1,...,r\rangle \,\,\, \leqslant \mathcal{B}^{x_{1}} \oplus ... \oplus \mathcal{B}^{x_{t}} \text{ and } \langle ([a_i q_i, a_j q_j]^m)^f\rangle \leqslant \mathcal{B}^{z_{1}} \oplus ... \oplus \mathcal{B}^{z_{l}}.$$

As in the proof of step $4$ of Proposition \ref{4.1}, there exist $q \in \mathcal{Q}$ and $k, k' \in \mathbb{Z}$ such that $\left([a_iq_i, a_jq_j]^{m}\right)^{fq^{k'}}\left(c_{ik}^{-m}\right)^f \in A^m \cap S $, where
$$S := \mathcal{B}^{x_{1}} \oplus ... \oplus \mathcal{B}^{x_{t}} \oplus \mathcal{B}^{z_{1} q^{k'}} \oplus ... \oplus \mathcal{B}^{z_{l} q^{k'}} \text{ and } \{q^{k'}z_1,...,q^{k'}z_l\} \cap \{x_1,...,x_t\} = \emptyset.$$
Thus
$$\left([a_iq_i, a_jq_j]^{m}\right)^{{f}q^{k'}} \in \bigoplus_{u=1}^{l}\mathcal{B}^{mq^{k'}z_u} \cap A^m.$$
and $A^m$ is $f$-invariant. Therefore $\mathbb{M}_{r}$ is not transitive self-similar.
\end{proof}

\end{document}